\title{Intermediate dimensions}
\author{Kenneth J. Falconer$^a$, Jonathan M. Fraser$^a$, and Tom Kempton$^b$}
\date{}
\def\rn{\mathbb{R}^n}
\renewcommand{\epsilon}{\varepsilon}
\newcommand\ubd{\overline{\mbox{\rm dim}}_{\rm B}\,} 
\newcommand\lbd{\underline{\mbox{\rm dim}}_{\rm B}\,} 
\newcommand\da{\underline{\mbox{\rm dim}}_{\,\theta}} 
\newcommand\db{\underline{\mbox{\rm dim}}_{\,\phi}} 
\newcommand\udb{\overline{\mbox{\rm dim}}_{\,\phi}} 
\newcommand\uda{\overline{\mbox{\rm dim}}_{\,\theta}} 
\newcommand\bdd{\mbox{\rm dim}_{\rm B}}
\newcommand\hdd{\mbox{\rm dim}_{\rm H}\,} 
\newcommand{\dimh}{\dim_{\rm H}}
\newcommand{\be}{\begin{equation}} 
\newcommand{\ee}{\end{equation}} 
 \newtheorem{theo}{Theorem}[section]
 \newtheorem{lem}[theo]{Lemma}
 \newtheorem{prop}[theo]{Proposition}
 \newtheorem{defi}[theo]{Definition}
\begin{document}
\maketitle

\vspace{-8mm}
\begin{center}
	$^a$Mathematical Institute, University of St Andrews, UK.\\
	 \vspace{3mm}
	$^b$School of Mathematics, University of Manchester, UK.\\
	 \vspace{3mm}
	 	\end{center}

\begin{abstract}
We introduce a continuum of dimensions which are `intermediate' between the familiar Hausdorff and box  dimensions.  This is done by restricting the families of allowable covers in the definition of Hausdorff dimension by insisting that $|U| \leq |V|^\theta$ for all sets $U, V$ used in a particular cover, where $\theta \in [0,1]$ is a parameter.  Thus, when $\theta=1$ only covers using sets of the same size are allowable, and we recover the box  dimensions, and when $\theta=0$ there are no restrictions, and we recover Hausdorff dimension.

We investigate many properties of the intermediate dimension (as a function of $\theta$), including proving that it is continuous on $(0,1]$ but not necessarily continuous at $0$, as well as establishing appropriate analogues of the mass distribution principle, Frostman's lemma, and the dimension formulae for products.  We also compute, or estimate, the intermediate dimensions of some familiar sets, including sequences formed by negative powers of integers, and Bedford-McMullen carpets.\\

\noindent \emph{Mathematics Subject Classification 2010}: primary: 28A80; secondary: 37C45.

\noindent \emph{Key words and phrases}: Hausdorff dimension, box  dimension, self-affine carpet.
 \end{abstract}

\section{Intermediate dimensions: definitions and background}
\setcounter{equation}{0}
\setcounter{theo}{0}

We work with subsets of $\rn$ throughout, although much of what we establish also holds in more general metric spaces. We denote the {\em diameter} of a set $F$ by $|F|$, and when we refer to a {\em cover} $\{U_i\}$ of a set $F$ we mean that $F\subseteq \bigcup_i U_i$ where $\{U_i\}$ is a finite or countable collection of sets.

Recall that  Hausdorff dimension $\dimh$ may be defined without introducing Hausdorff measures, but using Hausdorff content.  For  $F\subseteq \rn$,
\begin{align*}\label{hdim}
\dimh F = \inf \big\{& s\geq 0  :  \mbox{ \rm for all $\epsilon >0$ there exists a cover $ \{U_i\} $ of $F$ such that $\sum |U_i|^s \leq \epsilon$}  \big\},
\end{align*}
see \cite[Section 3.2]{Fa}. (Lower) box  dimension $\lbd$ may be expressed in a similar manner, by forcing the covering sets to be of the same diameter.  For  bounded $F\subseteq \rn$,
\begin{align*}
\lbd F =  \inf \big\{ s\geq 0  &:  \mbox{ \rm for all $\epsilon >0$ there exists a cover $ \{U_i\} $ of $F$} \\
 & \mbox{ \rm  such that  $|U_i| = |U_j|$ for all $i,j$  and  $\sum |U_i|^s \leq \epsilon$}  \big\}.
\end{align*}
see \cite[Chapter 2]{Fa}.  Expressed in this way, Hausdorff and box dimensions may be regarded as extreme cases of the same definition, one with no restriction on the size of covering sets, and the other requiring them all to have equal diameters.  With this in mind, one might regard them as the extremes of a continuum of dimensions with increasing restrictions on the relative sizes of covering sets.  This is the main idea of this paper, which we formalise by  considering restricted coverings where the diameters of the smallest and largest covering sets lie in a geometric range $\delta^{1/\theta} \leq |U_i| \leq \delta$ for some $0\leq \theta \leq 1$.
\begin{defi}\label{adef}
Let $F\subseteq \rn$ be bounded. For $0\leq \theta \leq 1$ we define the {\em  lower $\theta$-intermediate dimension} of $F$ by
\begin{align*}
\da F =  \inf \big\{& s\geq 0  :  \mbox{ \rm for all $\epsilon >0$ and all $\delta_0>0$,  there exists $0<\delta\leq \delta_0$} \\
 & \mbox{ \rm and a cover $ \{U_i\} $ of $F$ such that  $\delta^{1/\theta} \leq  |U_i| \leq \delta$ and 
 $\sum |U_i|^s \leq \epsilon$}  \big\}.
\end{align*}
Similarly, we define the {\em  upper $\theta$-intermediate dimension} of $F$ by
\begin{align*}
\uda F =  \inf \big\{& s\geq 0  :  \mbox{ \rm for all $\epsilon >0$ there exists $\delta_0>0$ such that for all $0<\delta\leq \delta_0$,} \\
 & \mbox{ \rm there is a cover $ \{U_i\} $ of $F$ such that  $\delta^{1/\theta} \leq  |U_i| \leq \delta$ and 
 $\sum |U_i|^s \leq \epsilon$}  \big\}.
\end{align*}
\end{defi}

With these definitions, 
$$\underline{\mbox{\rm dim}}_{0} F = \overline{\mbox{\rm dim}}_{0} F  = \hdd F, \quad \underline{\mbox{\rm dim}}_{1} F = \lbd F \quad {\mbox{ and }}\quad \overline{\mbox{\rm dim}}_{1} F = \ubd F, $$
where $\ubd$ is the upper box  dimension.  Moreover, it follows immediately that, for a bounded set $F$ and $\theta \in [0,1]$, 
\[
\hdd F \leq \da F \leq \uda F \leq \ubd F  \quad {\mbox{ and }}\quad \da F \leq  \lbd F.
\]
It is also immediate that $\da F$ and $\uda F$ are increasing in $\theta$, though as we shall see they need not be strictly increasing. Furthermore, $\uda$ is finitely stable, that is $\uda (F_1\cup F_2) = \max\{\uda F_1, \uda F_2\}$, and, for $\theta\in (0,1]$, both $\da F$ and $\uda F$ are unchanged on replacing $F$ by its closure. 

In many situations, even if $\hdd F < \ubd F$, we still have $\lbd F = \ubd F$ and $\da F = \uda F$ for all $\theta \in [0,1]$.  In this case we refer to the box  dimension $\bdd F = \lbd F = \ubd F$  and the {\em $\theta$-intermediate dimension} $\mbox{\rm dim}_{\theta} F = \da F = \uda F$.

This paper is devoted to understanding $\theta$-intermediate dimensions.  The hope is that $\mbox{\rm dim}_{\theta} F$ will interpolate between the Hausdorff and box  dimensions in a meaningful way, a rich and robust theory will be discovered, and interesting further questions unearthed. We first derive useful properties of intermediate dimensions, including that $\underline{\mbox{\rm dim}}_{0} F$ and  $\overline{\mbox{\rm dim}}_{0} F $ are continuous on $(0,1]$ but not necessarily at $0$, as well as proving versions of the mass distribution principle, Frostman's lemma and product formulae. We then examine a range of examples illustrating different types of behaviour including sequences formed by negative powers of integers, and self-affine Bedford-McMullen carpets.

 Intermediate dimensions provide an insight into the distribution of the diameters of covering sets needed when estimating the Hausdorff dimensions of sets whose Hausdorff and box dimensions differ.  They also have  concrete applications to well-studied problems.  For example, since the intermediate dimensions are preserved under bi-Lipschitz mappings,  they provide another invariant for Lipschitz classification of sets. A very specific variant was used in \cite{KP} to estimate the singular sets of partial differential equations.

A related approach to `dimension interpolation' was recently considered in \cite{spec1} where a new dimension function was introduced to interpolate between the box  dimension and the Assouad dimension.  In this case the dimension function was called the \emph{Assouad spectrum}, denoted by $\mbox{\rm dim}_\textup{A}^{\theta} F$ $(\theta \in (0,1))$.

\section{Properties of  intermediate dimensions}\label{sec:props}
\setcounter{equation}{0}
\setcounter{theo}{0}

\subsection{Continuity}

The first natural question is whether, for a fixed bounded set $F$, $\da F$ and $\uda F$  vary continuously for $\theta\in [0,1]$. We show this is the case, except possibly at $\theta = 0$.  We provide simple examples exhibiting discontinuity at $\theta = 0$ in Section \ref{examplessimple}.  However,  for many natural sets $F$ we find that the intermediate dimensions \emph{are} continuous at $0$ (and thus on $[0,1]$), for example for self-affine carpets, see Section \ref{carpetssec}.

\begin{prop}\label{cty}
Let $F$ be a non-empty bounded subset of $\rn$ and let $0\leq \theta<\phi  \leq 1$. Then 
\begin{equation}\label{ineqs}
\da F \ \leq \ \db F \ \leq \  \da F +\Big(1-\frac{\theta}{\phi}\Big) (n- \da F).
\end{equation}
and
\begin{equation}\label{ineqs2}
\uda F \ \leq \ \udb F \ \leq \  \uda F +\Big(1-\frac{\theta}{\phi}\Big) (n- \uda F).
\end{equation}
In particular,  $\theta \mapsto\da F$ and $\theta \mapsto\uda F$ are continuous for $\theta \in (0,1]$.
\end{prop}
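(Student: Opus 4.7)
The left-hand inequalities $\da F\le \db F$ and $\uda F\le \udb F$ are immediate: for $\theta<\phi$ and $0<\delta\le 1$ one has $\delta^{1/\theta}\le \delta^{1/\phi}$, so every $\phi$-admissible cover at scale $\delta$ is automatically $\theta$-admissible at the same scale, and the infimum defining the $\theta$-dimension runs over at least as large a family of covers.

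For the right-hand bound \eqref{ineqs}, fix $s>\da F$ and $\epsilon>0$. By definition, for arbitrarily small $\delta$ there is a cover $\{U_i\}$ of $F$ with $\delta^{1/\theta}\le |U_i|\le \delta$ and $\sum_i |U_i|^s\le \epsilon$. The plan is to convert this into a $\phi$-admissible cover at the \emph{rescaled} parameter $\delta':=\delta^{\phi/\theta}\le \delta$, chosen precisely so that $(\delta')^{1/\phi}=\delta^{1/\theta}$. Each $U_i$ with $|U_i|\le \delta'$ already satisfies $(\delta')^{1/\phi}\le |U_i|\le \delta'$ and is retained; each $U_i$ with $\delta'<|U_i|\le \delta$ is cut by a regular grid in $\rn$ into at most $C_n(|U_i|/\delta')^n$ pieces of diameter $\delta'$, with $C_n$ depending only on $n$.

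Setting $t:=s+(1-\theta/\phi)(n-s)$ one has $t\ge s$ and $\phi(n-t)=\theta(n-s)$, equivalently $(\delta')^{n-t}=\delta^{n-s}$. Retained pieces contribute $|U_i|^t\le |U_i|^s$ since $|U_i|\le 1$, and each subdivided $U_i$ contributes at most $C_n|U_i|^n(\delta')^{t-n}=C_n|U_i|^s\cdot |U_i|^{n-s}(\delta')^{t-n}\le C_n|U_i|^s$, using $|U_i|\le \delta$ together with the identity just noted. Summing yields a valid $\phi$-cover with $\sum_j|V_j|^t\le C_n\epsilon$, whence $\db F\le t$, and letting $s\searrow \da F$ gives \eqref{ineqs}. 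Estimate \eqref{ineqs2} follows from the same construction on taking $\delta'_0=\delta_0^{\phi/\theta}$, so that the universal quantifier over small $\delta'$ in the definition of $\udb F$ is supplied by the one for $\uda F$ via $\delta=(\delta')^{\theta/\phi}$. Continuity on $(0,1]$ is then immediate, since fixing any $\phi_0\in(0,1]$ and letting $\theta\to \phi_0^-$ or $\phi\to \phi_0^+$ makes $1-\theta/\phi$ vanish, squeezing $\da$ and $\db$ (respectively $\uda$ and $\udb$) together.

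The substantive step is the choice of rescaling $\delta'=\delta^{\phi/\theta}$. With the naive choice $\delta'=\delta$, one can only replace each too-small $U_i$ by a single set of diameter $\delta^{1/\phi}$ and so obtains the much weaker bound $\db F\le (\phi/\theta)\da F$. Rescaling instead recasts the problem as subdivision of the \emph{large} $U_i$, for which a volume count in $\rn$ introduces the factor $(|U_i|/\delta')^n$; it is precisely this ambient exponent $n$ that produces the constant $n$ appearing in \eqref{ineqs}, and isolating the correct rescaling is the only real obstacle in the argument.
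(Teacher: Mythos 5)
Your proof is correct and takes essentially the same route as the paper's: keeping the lower scale $\delta^{1/\theta}$ fixed (your rescaling $\delta'=\delta^{\phi/\theta}$ is exactly the paper's reparametrisation $\delta\leq|U_i|\leq\delta^{\theta}$ in disguise), subdividing the over-large covering sets by a grid into pieces of the top admissible size with a volume count contributing the ambient exponent $n$, and choosing the exponent $t=n+\theta(s-n)/\phi$. The small points you gloss over (fattening grid pieces to meet the lower diameter bound, and the trivial cases $s\geq n$ or $\theta=0$) are treated equally lightly in the paper.
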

\begin{proof}
We will only prove \eqref{ineqs} since \eqref{ineqs2} is similar. The left-hand inequality of \eqref{cty} is just the monotonicity of $\da F$. The right-hand inequality is trivially satisfied when $\da F=n$, so we assume that $0\leq \da F <n$. Suppose that $0\leq \theta<\phi  \leq 1$ and that  $0\leq \da F <s <n$. Then, given $\epsilon >0$, we may find arbitrarily small $\delta>0$ and countable or finite covers $\{U_i\}_{i\in I}$ of $F$ such that
\begin{equation}\label{epsum}
\sum_{i\in I} |U_i|^s<\epsilon \quad \mbox{ and } \quad \delta \leq |U_i| \leq \delta^\theta  \quad \mbox{ for all } i\in I.
\end{equation} 
Let
$$I_0 =\{i\in I: \delta \leq |U_i| \leq \delta^\phi\} \quad \mbox{ and } \quad I_1 =\{i\in I: \delta^\phi < |U_i| \leq \delta^\theta\}.$$ 
For each $ i\in I_1$ we may split $U_i$ into subsets of small coordinate cubes to get sets $\{U_{i,j}\}_{j\in J_i}$ such that $U_i \subseteq\bigcup_{j\in J_i} U_{i,j}$, with 
$|U_{i,j}| \leq \delta^\phi$ and ${\rm card} J_i \ \leq  \ c_n|U_i|^n \delta^{-\phi n} \  \leq  \ c_n\delta^{n(\theta-\phi)}$, where
$c_n = 4^n n^{n/2}$. 

Let $s<t\leq n$. Then $ \{U_i\}_{i\in I_0} \cup \{U_{i,j}\}_{i\in I_1, j \in J_i}$ is a cover of $F$ such that 
$\delta \leq |U_i|, |U_{i,j}| \leq \delta^\phi$. Taking sums with respect to this cover:
\begin{eqnarray*}
\sum_{i\in I_0}|U_i|^t  +  \sum_{i\in I_1}\sum_{j\in J_{i}}|U_{i,j}|^t 
&\leq& 
\sum_{i\in I_0}|U_i|^t  +  \sum_{i\in I_1} \delta^{\phi t} c_n |U_i|^n \delta^{-\phi n} \\
&\leq& 
\sum_{i\in I_0}|U_i|^t  + c_n \sum_{i\in I_1}  |U_i|^s  |U_i|^{n-s}\delta^{\phi( t-n)} \\
&\leq& 
\sum_{i\in I_0}|U_i|^s  + c_n \sum_{i\in I_1}  |U_i|^s  \delta^{\theta(n-s)}\delta^{\phi( t-n)} \\
&\leq& 
\sum_{i\in I_0}|U_i|^s  + c_n  \delta^{\phi[t-(n\phi+\theta(s-n))/\phi]}\sum_{i\in I_1}  |U_i|^s  \\
&\leq& (1+c_n) \sum_{i\in I}|U_i|^s\  < \ (1+c_n)\epsilon
\end{eqnarray*}
if $t\geq (n\phi+\theta(s-n))/\phi$, from \eqref{epsum}. This holds for some cover for arbitrarily small $\epsilon$ and  all $s>\da F$, giving 
$\db F  \leq n + \theta (\da F-n)/\phi$, which rearranges to give \eqref{ineqs}. 

Finally, note that \eqref{ineqs2} follows by exactly the same argument noting that the assumption $ \uda F <s $ gives rise to $\delta_0>0$ such that for all $\delta \in (0,\delta_0)$ we can find covers $\{U_i\}_{i\in I}$ of $F$ satisfying \eqref{epsum}.
\end{proof}

\subsection{A mass distribution principle for $\da$ and $\uda$}

The  \emph{mass distribution principle}  is a powerful tool in fractal geometry and provides a useful mechanism for estimating the Hausdorff dimension from below by considering measures supported on the set, see \cite[page 67]{Fa}.  We present  natural analogues for $\da$ and $\uda$.

\begin{prop}\label{mdp}
Let $F$ be a Borel subset of $\rn$ and let  $0\leq \theta \leq 1$ and $s\geq 0$. Suppose that there are numbers $a, c, \delta_0 >0$ such that for all $0< \delta\leq \delta_0$  we can find a Borel measure $\mu_\delta$ supported by $F$ with
$\mu_\delta (F) \geq a $, and with
\begin{equation}\label{mdiscond}
\mu_\delta (U) \leq c|U|^s \quad \mbox{ for all Borel sets  $U \subseteq \rn $ with } \delta \leq |U|\leq \delta^\theta.
\end{equation}
Then  $\da F \geq s$.  Moreover, if measures $\mu_\delta$ with the above properties can be found only for a sequence of $\delta \to 0$, then the conclusion is weakened to  $\uda F \geq s$. \end{prop}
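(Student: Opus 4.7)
The plan is to mimic the classical mass distribution principle, with the twist that the measure is allowed to depend on the scale. Fix a scale $\delta$ at which $\mu_\delta$ exists, and let $\{U_i\}$ be any cover of $F$ with all diameters in $[\delta, \delta^\theta]$. Replacing each $U_i$ by its closure (which preserves the diameter and yields a Borel set), \eqref{mdiscond} applies to each $U_i$ and summing gives
$$a \ \leq \ \mu_\delta(F) \ \leq \ \sum_i \mu_\delta(U_i) \ \leq \ c \sum_i |U_i|^s,$$
so $\sum_i |U_i|^s \geq a/c$. Once $\delta \leq 1$, every cover set has diameter at most $1$, and hence the same lower bound $\sum_i |U_i|^{s'} \geq a/c$ persists for every $s' \leq s$.

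I would then reconcile the two scale conventions. Definition \ref{adef} requires covers with sizes in $[r^{1/\theta}, r]$, matching \eqref{mdiscond} under the change of variables $\delta = r^{1/\theta}$, $r = \delta^\theta$. Consequently, every cover admissible at scale $r \leq \delta_0^\theta$ in the definition is a legitimate test cover for $\mu_{r^{1/\theta}}$ and therefore satisfies $\sum_i |U_i|^{s'} \geq a/c$ for all $s' \leq s$, whenever $r \leq 1$.

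Finally I would unwind the quantifiers. When $\mu_\delta$ exists for every $\delta \leq \delta_0$, the previous bound rules out $\sum_i |U_i|^{s'} \leq \epsilon$ for any $\epsilon < a/c$ at every scale $r \leq \min\{\delta_0^\theta,1\}$; this contradicts the existential clause defining $\da F$ for every $s' < s$, giving $\da F \geq s$. When the measures exist only along a sequence $\delta_k \to 0$, the same obstruction holds along the scales $r_k = \delta_k^\theta \to 0$, which still suffices to defeat the universal clause in the definition of $\uda F$: for any putative $\delta_0$ some $r_k \leq \delta_0$ admits no small-sum cover, and hence $\uda F \geq s$. The only subtle point is this quantifier bookkeeping together with the closure/monotonicity observation; the essential estimate is the one-line summation above, exactly as in the classical mass distribution principle.
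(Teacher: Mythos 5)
Your proposal is correct and follows essentially the same route as the paper: the single summation estimate $a \leq \mu_\delta(F) \leq \sum_i \mu_\delta(U_i) \leq c\sum_i |U_i|^s$ applied to every admissible cover, which the paper states directly and then handles the $\uda$ case ``similarly''. Your extra bookkeeping (closures of cover sets, matching the scale conventions $[\delta,\delta^\theta]$ versus $[\delta^{1/\theta},\delta]$, and the monotonicity in the exponent) is accurate but only makes explicit what the paper leaves implicit.
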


\begin{proof}
Let $\{U_i\}$ be a cover of $F$ such that $\delta \leq |U_i|\leq \delta^\theta$ for all $i$. Then
$$a\ \leq \ \mu_\delta(F) \ \leq\  \mu_\delta\Big(\bigcup_i U_i\Big)\ \leq \ \sum_i \mu_\delta(U_i) \
\leq \  c\sum_i |U_i|^s,$$
so that $\sum_i |U_i|^s \geq a/c>0$ for every admissible cover and therefore $\da F \geq s$.

The weaker conclusion regarding the upper intermediate dimension is obtained similarly.
\end{proof}

Note  the main difference between Proposition \ref{mdp} and the usual mass distribution principle is that a family of measures $\{\mu_\delta\}$ is used instead of a single measure.  Since each measure $\mu_\delta$ is only required to describe a range of scales, in practice one can often use finite sums of point masses.  Whilst the measures $\mu_\delta$ may vary, it is essential that they all assign mass at least $a>0$ to $F$.

\subsection{A Frostman type lemma for $\da$}

\emph{Frostman's lemma}  is another  powerful tool in fractal geometry, which asserts the existence of measures of the type considered by the mass distribution principle, see \cite[page 77]{Fa} or \cite[page 112]{mattila}.  The following analogue of Frostman's lemma holds for intermediate dimensions and is a useful dual to Proposition \ref{mdp}.

\begin{prop}\label{frostman}
Let $F$ be a compact subset of $\rn$, let $0< \theta \leq 1$, and suppose $0< s< \da F$.  There exists a constant $c >0$ such that for all $\delta \in (0,1)$  we can find a Borel probability measure  $\mu_\delta$ supported on $F$ such that for all $x \in \rn$ and $\delta^{1/\theta} \leq r \leq \delta$, 
\[
\mu_\delta (B(x,r)) \leq c r^s .
\]
Moreover, $\mu_\delta$ can be taken to be a finite collection of atoms.
\end{prop}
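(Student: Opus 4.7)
The plan is to adapt the classical dyadic iteration proof of Frostman's lemma, restricting the iteration to the scale window $[\delta^{1/\theta},\delta]$ dictated by Definition \ref{adef}. Negating that definition, the assumption $s<\da F$ yields constants $\epsilon_0>0$ and $\delta_0>0$ such that for every $\delta\in(0,\delta_0)$ and every cover $\{U_i\}$ of $F$ with $\delta^{1/\theta}\leq|U_i|\leq\delta$ one has $\sum_i|U_i|^s>\epsilon_0$. This positive lower bound on restricted covering sums will ultimately force each $\mu_\delta$ to carry total mass bounded below by a constant independent of $\delta$, and thus plays the role that positivity of Hausdorff content plays in the classical argument.

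For fixed $\delta\in(0,\delta_0)$, choose dyadic scales $k_1\leq k_2$ so that cubes in $\mathcal{D}_{k_1}$ have diameters in $(\delta/2,\delta]$ and cubes in $\mathcal{D}_{k_2}$ have diameters comparable to $\delta^{1/\theta}$ (the iteration becomes trivial in the edge case $\theta=1$, when $k_1=k_2$). For every $Q\in\mathcal{D}_{k_2}$ meeting $F$, pick $x_Q\in Q\cap F$ and assign to it an atom of mass $|Q|^s$; because $F$ is bounded the resulting measure $\nu$ is a finite sum of atoms. Now iterate upward in scale: for $k=k_2-1,k_2-2,\ldots,k_1$, whenever a cube $Q\in\mathcal{D}_k$ satisfies $\nu(Q)>|Q|^s$, rescale every atom inside $Q$ by the factor $|Q|^s/\nu(Q)$, so that the mass of $Q$ is capped at $|Q|^s$. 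Denote the final measure by $\mu$. By construction $\mu$ is supported on finitely many atoms in $F$ and $\mu(Q)\leq|Q|^s$ for every dyadic cube $Q$ at every level $k\in[k_1,k_2]$.

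The Frostman-type upper bound is then routine: any ball $B(x,r)$ with $r\in[\delta^{1/\theta},\delta]$ meets at most $C_n$ dyadic cubes of some level $k$ whose diameter is comparable to $r$, and this $k$ lies in $[k_1,k_2]$ up to a bounded shift depending only on $n$, so $\mu(B(x,r))\leq C_n r^s$. The crux is the lower bound $\mu(F)>c_1>0$. Let $\mathcal{T}^*$ denote the family of \emph{maximal} dyadic cubes, among all levels in $[k_1,k_2]$, whose mass was capped during the iteration. Maximality ensures that no ancestor of $Q\in\mathcal{T}^*$ was ever capped, so the capping value is preserved to the end: $\mu(Q)=|Q|^s$. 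Adjoin to $\mathcal{T}^*$ those $Q'\in\mathcal{D}_{k_2}$ meeting $F$ that are not contained in any member of $\mathcal{T}^*$; for such $Q'$ no ancestor is capped either, so the initial mass survives and $\mu(Q')=|Q'|^s$. The combined collection $\mathcal{C}$ is a disjoint cover of $F$ whose diameters all lie in the admissible range $[\delta^{1/\theta},\delta]$ (up to a harmless dimensional constant that can be absorbed into $\epsilon_0$), hence
\[
\mu(F)\;=\;\sum_{Q\in\mathcal{C}}\mu(Q)\;=\;\sum_{Q\in\mathcal{C}}|Q|^s\;>\;c_1
\]
by the hypothesis on $s$. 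Normalising $\mu_\delta:=\mu/\mu(F)$ delivers the required atomic probability measure with Frostman constant $c=C_n/c_1$. The remaining range $\delta\in[\delta_0,1)$ is handled trivially by taking $\mu_\delta$ to be a Dirac mass at a fixed point of $F$ and enlarging $c$ to absorb the bound $1\leq c r^s$ valid when $r\geq\delta_0^{1/\theta}$.

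The main obstacle is the lower bound on $\mu(F)$. Its success depends on the \emph{exact} equality $\mu(Q)=|Q|^s$ (not merely the inequality $\leq$) for every element of the auxiliary cover $\mathcal{C}$, since only then can $\mu(F)$ be identified with the covering sum and estimated via the hypothesis on $s$. This equality in turn hinges on taking $\mathcal{T}^*$ to consist of maximal capped cubes, whose ancestors escape any further rescaling, and on choosing $k_1,k_2$ so that $\mathcal{C}$ is an admissible cover under Definition \ref{adef}.
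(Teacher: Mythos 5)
Your proposal is correct and follows essentially the same route as the paper: atoms of mass comparable to $|Q|^s$ on the finest admissible dyadic cubes, upward capping of cube masses, extraction of a disjoint admissible cover of cubes on which the capped value $|Q|^s$ is attained exactly (your maximal-capped-cube formulation is equivalent to the paper's ``either $Q$ or its parent retains equality'' induction), the restricted covering-sum lower bound from $s<\da F$ to bound the total mass below, and normalisation. The minor deviations (diameter versus sidelength normalisation, handling $\delta\in[\delta_0,1)$ by a Dirac mass, absorbing dimensional constants into $\epsilon_0$) are harmless and match the level of constant-chasing in the paper's own argument.
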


\begin{proof}
This proof follows the proof of the classical version of Frostman's lemma given in \cite[pages 112-114]{mattila}.  For $m \geq 0$ let $\mathcal{D}_m$ denote the familiar partition of $[0,1]^n$ consisting of $2^{nm}$ pairwise disjoint half-open dyadic cubes of sidelength $2^{-m}$, that is cubes of the form $[a_1, a_1+2^{-m}) \times \cdots \times [a_n, a_n+2^{-m})$.  By translating and rescaling we may assume without loss of generality that $F \subseteq [0,1]^n$ and that $F$ is not contained in any  $Q \in \mathcal{D}_1$. It follows from the definition of $\da F$ that there exists $\varepsilon>0$ such that for all $\delta \in (0,1)$ and for all covers $\{U_i\}_i$ of $F$ satisfying $\delta^{1/\theta} \leq |U_i| \leq \delta$,
\begin{equation} \label{goodcover1}
\sum_i |U_i|^s > \varepsilon.
\end{equation}
Given $\delta \in (0,1)$, let $m \geq 0$ be the unique integer satisfying $2^{-m-1}<  \delta^{1/\theta} \leq 2^{-m}$ and let $\mu_m$ be a measure defined on $F$ as follows: for each $Q \in \mathcal{D}_m$  such that $Q \cap F \neq \emptyset$, then choose an arbitrary  point $ x_Q \in Q \cap F$ and let
\[
\mu_m = \sum_{ Q \in \mathcal{D}_m : Q \cap F \neq \emptyset} 2^{-ms} \delta_{x_Q}
\]
where $\delta_{x_Q}$ is a point mass at $x_Q$.  Modify $\mu_m$ to form a measure $\mu_{m-1}$, supported on the same finite set, defined by 
\[
\mu_{m-1}\vert_Q = \min\{1, 2^{-(m-1)s} \mu_m(Q)^{-1}\}  \mu_m\vert_Q
\]
for all $Q \in \mathcal{D}_{m-1}$,  where $\nu \vert_E$ denotes  the restriction of $\nu$ to $E$. The purpose of this modification is to reduce the mass of cubes which carry too much measure.  This is done since we are ultimately trying to construct a measure which we can estimate uniformly from above.   Continuing inductively, $\mu_{m-k-1}$ is obtained from $\mu_{m-k}$ by
\[
\mu_{m-k-1}\vert_Q = \min\{1, 2^{-(m-k-1)s} \mu_{m-k} (Q)^{-1}\} \mu_{m-k} \vert_Q
\]
for all $Q \in \mathcal{D}_{m-k-1}$.  We terminate this process when we define $\mu_{m-l}$ where $l$ is the largest integer satisfying $2^{-(m-l)}n^{1/2} \leq \delta$.  (We may assume that $l \geq 0$ by choosing $\delta$ sufficiently small to begin with.)  In particular, cubes $Q \in \mathcal{D}_{m-l}$ satisfy $|Q| = 2^{-(m-l)}n^{1/2} \leq \delta$. By construction we have
\begin{equation} \label{goodbound1}
\mu_{m-l}(Q) \leq 2^{-(m-k)s} = |Q|^s n^{-s/2}
\end{equation}
for all $k =0, \dots, l$ and $Q \in \mathcal{D}_{m-k}$.    Moreover, for all $x \in F$, there is at least one $k \in \{0, \dots, l\}$ and $Q \in \mathcal{D}_{m-k}$ with $x \in Q$ such that the inequality in \eqref{goodbound1} is an equality.  This is because all cubes at level $m$ satisfy the equality for $\mu_m$ and if a cube $Q$ satisfies the equality for $\mu_{m-k}$, then either $Q$ or its parent cube satisfies the equality for $\mu_{m-k-1}$. For each $x \in F$, choosing the largest such $Q$ yields a finite collection of cubes $Q_1, \dots, Q_t$ which cover $F$ and satisfy $\delta^{1/\theta} \leq |Q_i| \leq \delta$ for $i = 1, \dots, t$.  Therefore, using \eqref{goodcover1},
\[
\mu_{m-l}(F) = \sum_{i=1}^t \mu_{m-l}(Q_i) = \sum_{i=1}^t |Q_i|^s n^{-s/2} > \varepsilon  n^{-s/2}.
\]
Let $\mu_\delta = \mu_{m-l}(F)^{-1} \mu_{m-l}$, which is clearly a probability measure supported on a finite collection of points. Moreover, for all $x \in \rn$ and $\delta^{1/\theta} \leq r \leq \delta$, $B(x,r)$ is certainly contained in at most $c_n$ cubes in $\mathcal{D}_{m-k}$ where $k$ is chosen to be the largest integer satisfying $0 \leq k \leq l$ and  $2^{-(m-k+1)} < r$, and $c_n$ is a constant depending only on $n$.  Therefore, using \eqref{goodbound1},
\[
\mu_\delta (B(x,r)) \leq  c_n  \mu_{m-l}(F)^{-1}  2^{-(m-k) s} \leq    c_n \varepsilon^{-1}  n^{s/2} 2^s r^s
\]
which completes the proof, setting $c=  c_n \varepsilon^{-1}  n^{s/2} 2^s $.
\end{proof}

\subsection{General bounds}

Here we consider general bounds which rely on the Assouad dimension and which have interesting consequences for continuity.  Namely, they provide numerous examples where the intermediate dimensions are \emph{dis}continuous at $\theta=0$ and also provide another proof that the intermediate dimensions are continuous at $\theta = 1$. The \emph{Assouad dimension} of $F \subseteq \rn$ is defined by
\begin{eqnarray*}
\dim_\textup{A} F &=& \inf \bigg\{ s \geq 0  \ : \ \text{there exists $C>0$ such that for all $x \in F$, } \\
&\,& \qquad \qquad   \text{and for all $0<r<R$, we have} \  N_r(F \cap B(x,R)) \leq C \left(\frac{R}{r} \right)^s \bigg\}
\end{eqnarray*}
where $N_r(A)$ denotes the smallest number of sets of diameter at most $r$ required to cover a set $A$.  In general  $\underline{\dim}_\textup{B} F \leq \overline{\dim}_\textup{B} F \leq \dim_\textup{A} F \leq n$, but equality of these three dimensions occurs in many cases, even if the Hausdorff dimension and box dimension are distinct, for example if the box dimension is equal to  the ambient spatial dimension.   See \cite{Fra, robinson} for more background on the Assouad dimension. 
The following proposition gives lower bounds for the intermediate dimensions in terms of Assouad dimensions.

\begin{prop} \label{assouad}
Given any non-empty bounded $F \subseteq \rn$ and $\theta \in (0,1)$, 
\[
\da F \geq \dim_\textup{A} F - \frac{\dim_\textup{A} F - \underline{\dim}_\textup{B} F}{\theta},
\]
and
\[
\uda F \geq \dim_\textup{A} F - \frac{\dim_\textup{A} F - \overline{\dim}_\textup{B} F}{\theta}.
\]
In particular, if $\underline{\dim}_\textup{B} F = \dim_\textup{A} F$, then $\da F =\overline{\dim}_\theta F = \underline{\dim}_\textup{B} F = \dim_\textup{A} F$ for all $\theta \in (0,1]$. 
\end{prop}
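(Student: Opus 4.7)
Fix $\theta \in (0,1)$ and abbreviate $A = \dim_\textup{A} F$ and $\underline{B} = \underline{\dim}_\textup{B} F$. The idea is a two-sided counting argument at the smallest allowed scale $\delta^{1/\theta}$: the Assouad dimension controls from \emph{above} how many $\delta^{1/\theta}$-pieces each covering set $U_i$ can be cut into, while the lower box dimension forces the total number of such pieces to be \emph{large}. These two bounds, taken together, force $\sum |U_i|^s$ to stay uniformly away from zero once $s$ is small enough.

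Concretely I would fix $s' > A$ and $\underline{b} < \underline{B}$ and, using that any $U_i$ meeting $F$ sits in a ball $B(x_i, |U_i|)$ centred at some $x_i \in F$, invoke the definition of Assouad dimension to obtain a constant $C$ with $N_{\delta^{1/\theta}}(F \cap U_i) \leq C(|U_i|/\delta^{1/\theta})^{s'}$. Summing over any admissible cover $\{U_i\}$ (with $\delta^{1/\theta} \leq |U_i| \leq \delta$) and combining with $N_{\delta^{1/\theta}}(F) \geq (\delta^{1/\theta})^{-\underline{b}}$, valid for all sufficiently small $\delta$ by the choice of $\underline{b}$, yields
\[
\delta^{-\underline{b}/\theta} \;\leq\; N_{\delta^{1/\theta}}(F) \;\leq\; \sum_i N_{\delta^{1/\theta}}(F \cap U_i) \;\leq\; C\delta^{-s'/\theta}\sum_i |U_i|^{s'}.
\]
For $s < s'$, the trivial estimate $|U_i|^{s'-s} \leq \delta^{s'-s}$ converts this into
\[
\sum_i |U_i|^s \;\geq\; C^{-1}\,\delta^{\,s - s' + (s'-\underline{b})/\theta},
\]
which stays uniformly bounded below whenever $s \leq s' - (s'-\underline{b})/\theta$. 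Hence no admissible cover can make $\sum|U_i|^s$ arbitrarily small, giving $\da F \geq s' - (s'-\underline{b})/\theta$; sending $s' \downarrow A$ and $\underline{b} \uparrow \underline{B}$ produces the stated inequality.

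For the upper intermediate dimension, the same argument applies with $\underline{b}$ replaced by $\overline{b} < \overline{\dim}_\textup{B} F$. Here the bound $N_\rho(F) \geq \rho^{-\overline{b}}$ only holds along some sequence $\rho_k \to 0$, but this matches the definition of $\uda F$: given any $\delta_0 > 0$, I set $\delta = \rho_k^\theta \leq \delta_0$ for sufficiently large $k$ and rerun the estimate at this particular $\delta$. The final claim is then formal: when $\underline{B} = A$, the sandwich $\underline{B} \leq \overline{\dim}_\textup{B} F \leq A$ forces $\overline{\dim}_\textup{B} F = A$ as well, the correction terms in both inequalities vanish, and combining with the upper bounds $\da F \leq \underline{B}$ and $\uda F \leq \overline{\dim}_\textup{B} F$ collapses everything to $A$ for every $\theta \in (0,1]$. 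I do not foresee a genuinely hard step; the only delicacy is aligning the exponents so the powers of $\delta$ cancel correctly, and remembering to centre the Assouad ball at a point of $F$ so the covering inequality applies.
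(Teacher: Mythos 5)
Your argument is correct, and it reaches the stated bounds by a route that differs in mechanism, if not in spirit, from the paper's. The paper proves this proposition with measures: it uses the lower (or upper) box dimension to produce a $\delta$-separated subset of $F$ of cardinality at least $C_0\delta^{-b}$, puts the uniform probability measure $\mu_\delta$ on those points, uses the Assouad dimension to bound how many of the separated points any set $U$ with $\delta \leq |U| \leq \delta^{\theta}$ can meet, and then invokes its intermediate-dimension mass distribution principle (Proposition \ref{mdp}) to conclude, with the sequence-of-scales caveat in that proposition handling the $\uda$ case. You instead double-count covering numbers at the finest admissible scale: the Assouad dimension bounds $N_{\delta^{1/\theta}}(F\cap U_i)$ from above for each cover element (correctly centring the ball at a point of $F\cap U_i$), subadditivity sums these, and the box-dimension lower bound on $N_{\delta^{1/\theta}}(F)$ forces $\sum_i |U_i|^s$ to stay bounded away from zero once the exponents are arranged, with the $\limsup$ nature of $\overline{\dim}_\textup{B}$ matched to the quantifiers in the definition of $\uda$ exactly as you describe. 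The underlying counting is morally identical (the paper's $\mu_\delta$ is just normalised counting measure on a packing), but your version is self-contained and avoids both the measure construction and Proposition \ref{mdp}, while the paper's phrasing buys a reusable template — the same mass distribution principle is the workhorse for the product formula and the carpet lower bounds — and illustrates how the family $\{\mu_\delta\}$ is meant to be used in practice. Your exponent bookkeeping, the passage to the limits $s'\downarrow \dim_\textup{A} F$, $b\uparrow \underline{\dim}_\textup{B} F$ (resp. $\overline{\dim}_\textup{B} F$), and the final sandwich argument for the case $\underline{\dim}_\textup{B} F = \dim_\textup{A} F$ (including $\theta=1$ via monotonicity and $\da F \leq \underline{\dim}_\textup{B} F$) are all sound.
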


\begin{proof}
We will prove the lower bound for $\da F$, the proof for $\uda F$  is similar.  Fix $\theta \in (0,1)$ and assume that $\underline{\dim}_\textup{B} F >0$, otherwise the result is trivial.  Let
\[
0<b< \underline{\dim}_\textup{B} F  \leq  \dim_\textup{A} F < d < \infty
\]
and $\delta \in (0,1)$ be given.  By the definition of lower box dimension, there exists a uniform constant $C_0$, depending only on $F$ and $b$, such that there is a $\delta$-separated set of points in $F$ of cardinality at least $C_0 \delta^{-b}$.  Let $\mu_\delta$ be a uniformly distributed probability measure on these points, i.e. a sum of $C_0\delta^{-b}$ point masses each with mass $C_0^{-1} \delta^{b}$. We use our mass distribution principle with this measure to prove the proposition.

Let $U \subseteq \rn$ be a Borel set with $|U| = \delta^\gamma$ for some $\gamma \in [\theta, 1]$.  By the definition of Assouad dimension there exists a uniform constant $C_1$, depending only on $F$ and $d$, such that $U$ intersects at most $C_1 ( \delta^\gamma/\delta)^d$ points in the support of $\mu_\delta$.  Therefore
\[
\mu_\delta (U) \leq C_1\delta^{(\gamma-1)d} C_0^{-1} \delta^{b} = C_1C_0^{-1} \ |U|^{(\gamma d - d+ b)/ \gamma} \leq  C_1C_0^{-1}  \ |U|^{(\theta d - d+ b)/ \theta} 
\]
which, using Proposition \ref{mdp}, implies that
\[
\da F \geq (\theta d - d+ b)/ \theta = d - \frac{d-b}{\theta}.
\]
Letting $d \to  \dim_\textup{A} F$ and $b \to \underline{\dim}_\textup{B} F$ yields the desired result.
\end{proof}

This proposition implies that for bounded sets with $\dim_\textup{H} F < \underline{\dim}_\textup{B} F = \dim_\textup{A} F$, the intermediate dimensions $\da F$ and $\uda F$ are necessarily discontinuous at $\theta = 0$.  In fact the intermediate dimensions are constant on $(0,1]$ in this case.  On the other hand, this gives an alternative demonstration that $\da F$ and $\uda F$  are \emph{always} continuous at $\theta=1$.  Moreover, the proposition provides a quantitative lower bound near $\theta=1$.  In Section \ref{examplessimple} we will use Proposition \ref{assouad} to construct examples exhibiting a range of behaviours.

\subsection{Product formulae}

A well-studied problem in dimension theory is how dimensions of product sets behave.  The following product formulae for  intermediate dimensions may be of interest in their own right, but in Section \ref{examplessimple} they will be used to construct  examples.

\begin{prop} \label{products}
Let $E \subseteq \rn$ and $F \subseteq \mathbb{R}^m$ be bounded and $\theta \in [0,1]$.  Then
\[
\da E + \da F\ \leq\ \da (E \times F)\    \leq\ \overline{\dim}_\theta ( E \times F)\ \leq\ \overline{\dim}_\theta  E + \overline{\dim}_\textup{B} F.
\]
\end{prop}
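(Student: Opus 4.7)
The middle inequality $\da(E\times F) \leq \uda(E\times F)$ is immediate from the definitions, so the content resides in the two outer bounds, which I would treat separately.

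For the lower bound $\da E + \da F \leq \da(E\times F)$, the plan is to marry the Frostman lemma (Proposition \ref{frostman}) with the mass distribution principle (Proposition \ref{mdp}). The $\theta=0$ case is the classical Hausdorff product inequality $\hdd E + \hdd F \leq \hdd(E\times F)$, so assume $\theta \in (0,1]$; since $\da$ is unchanged on passing to the closure for such $\theta$, I may take $E,F$ compact. Fix $s < \da E$ and $t < \da F$. Proposition \ref{frostman} supplies, for every $\delta \in (0,1)$, Borel probability measures $\mu_\delta^E$ on $E$ and $\mu_\delta^F$ on $F$ together with a uniform constant $c>0$ such that
\[
\mu_\delta^E(B(x,r)) \leq c r^s \quad \text{and} \quad \mu_\delta^F(B(y,r)) \leq c r^t
\]
for all $x,y$ and all $r$ in the admissible scale window. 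Let $\mu_\delta := \mu_\delta^E \times \mu_\delta^F$; this is a Borel probability measure on $E\times F$. For any Borel $U \subseteq \mathbb{R}^{n+m}$ of admissible diameter, pick $(x_0,y_0) \in U$ to obtain $U \subseteq B(x_0,|U|) \times B(y_0,|U|)$, whence
\[
\mu_\delta(U) \leq \mu_\delta^E\bigl(B(x_0,|U|)\bigr)\,\mu_\delta^F\bigl(B(y_0,|U|)\bigr) \leq c^2 |U|^{s+t}.
\]
Proposition \ref{mdp} then gives $\da(E\times F) \geq s+t$; letting $s \nearrow \da E$ and $t \nearrow \da F$ completes this inequality.

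For the upper bound $\uda(E\times F) \leq \uda E + \ubd F$, the idea is to glue a cover of $E$ realising $\uda E$ to box covers of $F$ at the matching scale. The case $\theta=0$ is the classical $\hdd(E\times F) \leq \hdd E + \ubd F$, so assume $\theta > 0$. Fix $s > \uda E$, $t > \ubd F$ and $\epsilon>0$. Choose $\delta_0$ small enough that, for every $\delta \in (0,\delta_0)$, there is a cover $\{U_i\}$ of $E$ with $\delta^{1/\theta} \leq |U_i| \leq \delta$ and $\sum_i|U_i|^s \leq \epsilon$, while simultaneously $N_r(F) \leq r^{-t}$ for all $r \leq \delta$. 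For each $i$, cover $F$ by sets $V_{i,1},\dots,V_{i,N_i}$ of diameter $|U_i|$ with $N_i \leq |U_i|^{-t}$. The products $\{U_i \times V_{i,j}\}$ then cover $E\times F$ with $|U_i \times V_{i,j}| \leq \sqrt{2}\,|U_i|$, yielding
\[
\sum_{i,j}|U_i \times V_{i,j}|^{s+t} \leq 2^{(s+t)/2}\sum_i |U_i|^{-t}\,|U_i|^{s+t} = 2^{(s+t)/2}\sum_i|U_i|^s \leq 2^{(s+t)/2}\,\epsilon.
\]
Taking $\delta' := \sqrt{2}\delta$ makes the upper diameter bound $|U_i \times V_{i,j}| \leq \delta'$ automatic; any product with diameter below $(\delta')^{1/\theta}$ is enlarged to a set of diameter exactly $(\delta')^{1/\theta}$, which inflates its diameter by a factor at most $(\delta'/\delta)^{1/\theta} = 2^{1/(2\theta)}$ and hence multiplies the total $(s+t)$-sum by a bounded constant depending only on $s,t,\theta$. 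Letting $\epsilon \to 0$ and then $s \searrow \uda E$, $t \searrow \ubd F$ concludes.

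The principal obstacle is the diameter bookkeeping in the upper bound: the factor $\sqrt{2}$ introduced by combining diameters in the Euclidean product metric, together with the highly asymmetric stretching $\delta \mapsto \delta^{1/\theta}$, pushes the natural product cover slightly outside the admissible window $[\delta^{1/\theta},\delta]$, so a short enlargement argument is required to salvage the lower diameter bound without losing control of the sum. Everything else is a routine application of the machinery already developed in this section.
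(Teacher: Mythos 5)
Your proof is correct and follows essentially the same route as the paper: Frostman's lemma (Proposition \ref{frostman}) combined with the mass distribution principle (Proposition \ref{mdp}) applied to the product measure for the left-hand inequality, and product covers formed from a cover of $E$ witnessing $\uda E$ together with box covers of $F$ at the matching scales for the right-hand inequality. The only difference is cosmetic: where you rescale to $\delta' = \sqrt{2}\,\delta$ and enlarge undersized product sets, the paper instead covers each product set by a bounded number $c_{n+m}$ of sets with diameters in the original window $[\delta^{1/\theta},\delta]$; both fixes cost only a bounded constant factor.
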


\begin{proof}
Fix $\theta \in (0,1)$ throughout, noting that the cases when $\theta=0,1$ are well-known, see \cite[Chapter 7]{Fa}.  We begin by demonstrating the left-hand inequality.  We may assume that   $\da E ,  \da F  >0$ as otherwise the conclusion follows by monotonicity.  Moreover, since $E,F$ are bounded we may assume they are compact since all the dimensions considered are unchanged under taking closure.  Let $0< s< \da E$ and $0<t< \da F$.  It follows from Proposition \ref{frostman} that there exist constants $C_s,C_t > 0$ such that for all $\delta \in (0,1)$  there exist Borel probability measures $\mu_\delta$ supported on $E$ and $\nu_\delta$ supported on $F$ such that for all $x \in \rn$ and $\delta^{1/\theta} \leq r \leq \delta$,
\[
\mu_\delta(B(x,r)) \leq C_s r^s \qquad \text{and} \qquad \nu_\delta(B(x,r)) \leq C_t r^t.
\]
Consider the product measure $\mu_\delta \times \nu_\delta$ which is supported on $E \times F$.  For $z \in \rn \times  \mathbb{R}^m$ and $\delta^{1/\theta} \leq r \leq \delta$,
\[
(\mu_\delta \times \nu_\delta) (B(z,r)) \leq C_s C_t  r^{s+t}
\]
and Proposition \ref{mdp} yields $\da (E \times F) \geq s+t$; letting  $  s\to \da E$ and $t\to \da F$ gives the  desired inequality.

The middle inequality is trivial and so it remains to prove the right-hand inequality.  Let $s > \overline{\dim}_\theta  E$ and $d > \overline{\dim}_\textup{B} F$.  From the definition of $\overline{\dim}_\textup{B} F$ there exists a constant $\delta_1 \in (0,1)$ such that for all $0<r<\delta_1$ there is a cover of $F$ by at most $r^{-d}$ sets of diameter $r$.
Let $\varepsilon>0$. By the definition of $\overline{\dim}_\theta  E$ there exists  $\delta_0\in (0,\delta_1)$ such that for all $0<\delta < \delta_0$  there is a cover of $E$ by sets $\{U_i\}_{i}$ with $\delta^{1/\theta} \leq |U_i| \leq \delta$ for all $i$ and 
\[
\sum_{i} |U_i|^s \leq \varepsilon.
\]
Given such a cover of $E$, for each $i$ let $\{U_{i,j}\}_{j}$ be a cover of $F$ by at most $|U_i|^{-d}$ sets with 
diameters $|U_{i,j}|=|U_{i}|$ for all $j$.  Then
$$ E\times F\ \subseteq\  \bigcup_i \bigcup_j \big(U_i \times U_{i,j}\big),$$ 
with
$$ \sum_i\sum_j |U_i \times U_{i,j}|^{s+d} \ \leq\ \sum_i|U_i|^{-d}\big( \sqrt{2}|U_i|\big)^{s+d}
  \ =\ 2^{(s+d)/2}\sum_i|U_i|^{s}  \ \leq\ 2^{(s+d)/2}\varepsilon.$$
Since
$\delta^{1/\theta} \leq |U_i \times U_{i,j}| \leq \sqrt{2}\delta$ for all $i,j$, each set $U_i \times U_{i,j}$ may be covered by at most $c_{n+m}$ sets $\{V_{i,j,k}\}_k$ with diameters  $\delta^{1/\theta}\leq |V_{i,j,k}| \leq \min\{|U_i \times U_{i,j}|,\delta\}\leq \delta$, where $c_{q}$ is the least number such that  every set  in $\mathbb{R}^q$ of diameter $\sqrt{2}$ can be covered by at most $c_{q}$ sets of diameter 1. Hence
$$\sum_i\sum_j \sum_k|V_{i,j,k}|^{s+d}\ \leq \ c_{n+m}2^{(s+d)/2}\varepsilon.$$
As $\varepsilon$ may be taken arbitrarily small, $\overline{\dim}_\theta  (E \times F) \leq s+d$; letting $  s\to \uda E$ and $d\to \overline{\dim}_\textup{B} F$ completes the proof.
\end{proof}

\section{Examples}\label{sec:egs}
\setcounter{equation}{0}
\setcounter{theo}{0}

In this section we construct several simple examples where the intermediate dimensions exhibit a range of phenomena.  All of our examples are compact subsets of $\mathbb{R}$ or $\mathbb{R}^2$ and in all examples the upper and the lower intermediate dimensions coincide for all $\theta \in [0,1]$. 

\subsection{Convergent sequences}\label{secseq}
Let $p>0$ and
\begin{equation*}
F_p = \bigg\{0, \frac{1}{1^p}, \frac{1}{2^p},\frac{1}{3^p},\ldots \bigg\}.
\end{equation*}
Since $F_p$ is countable, $\hdd F_p=0$. It is well-known that $\bdd F_p=1/(p+1)$, see \cite[Chapter 2]{Fa}. We  obtain the intermediate dimensions of $F_p$.
\begin{prop}\label{conseq}
For $p>0$ and $0\leq \theta \leq 1$,
$$\da F_p  = \overline{\dim}_\theta F_p = \frac{\theta}{p+\theta}.$$
\end{prop}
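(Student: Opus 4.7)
The endpoint cases are immediate: $F_p$ is countable, so $\hdd F_p = 0 = \theta/(p+\theta)|_{\theta = 0}$, and the well-known $\bdd F_p = 1/(p+1) = \theta/(p+\theta)|_{\theta=1}$. For $\theta \in (0,1)$ I would prove matching upper and lower bounds.

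For the upper bound I would construct explicit covers at each small scale $\delta$ by splitting $F_p$ at the index $K \asymp \delta^{-1/(p+\theta)}$: cover the ``head'' $\{1/k^p : k \leq K\}$ by $K$ sets of minimal allowed diameter $\delta^{1/\theta}$, each centred at a head point, and cover the ``tail'' $\{1/k^p : k > K\} \subseteq [0, 1/K^p]$ by about $(K^p \delta)^{-1}$ intervals of maximal allowed diameter $\delta$. The choice of $K$ is precisely what balances the two contributions $K\delta^{s/\theta}$ and $K^{-p}\delta^{s-1}$ to $\sum_i|U_i|^s$; with $s = \theta/(p+\theta) + \varepsilon$ these reduce to $\delta^{\varepsilon/\theta}$ and $\delta^\varepsilon$ respectively, both tending to $0$. (As a sanity check, the minimum head-gap is $\asymp \delta^{(p+1)/(p+\theta)} \geq \delta^{1/\theta}$, equivalent to $\theta \leq 1$, so the small sets really are as small as the constraint allows.) This yields $\uda F_p \leq \theta/(p+\theta)$.

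For the lower bound I would apply the mass distribution principle (Proposition \ref{mdp}) with atomic probability measures $\mu_\delta$ assigning uniform mass $1/N$ to the first $N \asymp \delta^{-1/(p+\theta)}$ points $\{1/k^p : 1 \leq k \leq N\}$. For any Borel set $U$ with $\delta^{1/\theta}\leq |U| \leq \delta$, a mean-value estimate for $x \mapsto x^{-1/p}$ bounds the number of support points in $U$ by $p^{-1}|U|N^{p+1}+1$, the worst case being $U$ adjacent to the densest support point $1/N^p$. Hence $\mu_\delta(U) \leq p^{-1}|U|N^p + N^{-1}$. With $s = \theta/(p+\theta)$ and the chosen $N$, the first summand equals $p^{-1}(|U|/\delta)^{p/(p+\theta)}|U|^s \leq p^{-1}|U|^s$ (tight at $|U|=\delta$), and the second equals $\delta^{1/(p+\theta)} \leq |U|^s$ (tight at $|U|=\delta^{1/\theta}$), so $\mu_\delta(U) \leq (1 + p^{-1})|U|^s$ throughout the scale window, and Proposition \ref{mdp} yields $\da F_p \geq \theta/(p+\theta)$.

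The main difficulty is that the single exponent $\theta/(p+\theta)$ must emerge from two superficially independent balances: on the covering side, between $K$ singletons of diameter $\delta^{1/\theta}$ and $K^{-p}\delta^{-1}$ sets of diameter $\delta$; on the measure side, between the atomic mass $1/N$ and the density-driven mass $|U|N^p$. Both balances pin down the same quantity $\asymp \delta^{-1/(p+\theta)}$, and the two summands in the mass bound turn out to be tight at opposite ends of the window $[\delta^{1/\theta},\delta]$. Verifying this two-sided tightness (rather than merely the endpoints) and handling sets $U$ that straddle or lie below the smallest supported point $1/N^p$ are the only delicate parts of the argument.
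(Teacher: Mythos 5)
Your proposal is correct and follows essentially the same route as the paper: split the sequence at an index $\asymp\delta^{-1/(p+\theta)}$, cover the head points with sets of the smallest admissible diameter and the tail interval with sets of the largest admissible diameter for the upper bound, and for the lower bound apply the mass distribution principle (Proposition \ref{mdp}) to atomic measures on the first $\asymp\delta^{-1/(p+\theta)}$ points together with the same mean-value gap estimate. The only differences are cosmetic: you parametrise the scale window as $[\delta^{1/\theta},\delta]$ rather than the paper's $[\delta,\delta^{\theta}]$, and you normalise $\mu_\delta$ to a probability measure instead of using atoms of mass $\delta^{s}$.
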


\begin{proof}
We first bound  $\overline{\dim}_\theta F_p$ above. Let $0<\delta<1$ and let $M =\lceil \delta^{-(s +\theta(1-s))/(p+1)}\rceil$. Write $B(x,r)$ for the closed interval (ball) of centre $x$ and length $2r$.
Take a covering ${\mathcal U}$ of  $F_p$ consisting of $M$ intervals $B(k^{-p}, \delta/2)$  of length $\delta$ for $1\leq k\leq M$ and $\lceil M^{-p} /\delta^\theta\rceil \leq M^{-p}/\delta^\theta +1$ intervals  of length $\delta^\theta$  that cover $[0, M^{-p}].$   Then
\begin{eqnarray*} 
\sum_{U\in {\mathcal U}} |U|^s &\leq & M\delta ^s + \delta^{\theta s}\Big(\frac{1}{M^p\delta^\theta}+ 1\Big)\\ 
 &=&  M\delta ^s + \frac{\delta^{\theta (s-1)}}{M^p}+ \delta^{\theta s}\\
 & \leq & ( \delta^{-(s +\theta(1-s))/(p+1)} +1) \delta^s 
+ \delta^{\theta (s-1)}\delta^{(s +\theta(1-s))p/(p+1)} +\delta^{\theta s}\\
& =&  2\delta^{(\theta (s-1)+sp)/(p+1)} +  \delta^s+ \delta^{\theta s} \ \to \ 0
\end{eqnarray*}
as $\delta \to 0$ if $s(\theta +p) > \theta$. Thus $\overline{\dim}_\theta F_p \leq\theta/(p+\theta)$.  
\medskip

For the lower bound we put suitable measures on $F_p$ and apply Proposition \ref{mdp}. Fix $s = \theta/(p+\theta)$. Let $0<\delta < 1$ and again let $M =\lceil \delta^{-(s +\theta(1-s))/(p+1)}\rceil$.  Define $\mu_\delta$ as the sum of point masses on the points $1/k^p \ (1\leq k<\infty)$ with
\begin{equation}\label{mass}
 \mu_\delta \Big(\left\{\frac{1}{k^p}\right\}\Big)\  = \ 
\left\{
\begin{array}{cl}
 \delta^s & \mbox{ if } 1\leq k \leq  M   \\
 0 &    \mbox{ if } M+1\leq k <\infty  
\end{array}
\right. .
\end{equation}
Then
\begin{eqnarray*} 
\mu_\delta(F_p) &=& M\delta^s \\
& \geq&\delta^{-(s +\theta(1-s))/(p+1)}\delta^s \\
& =& \delta^{(ps +\theta(s-1))/(p+1)}  = 1 
\end{eqnarray*}
by the choice of $s$. 

To see that \eqref{mdiscond} is satisfied, 
note that if $2\leq k\leq M$ then, by a mean value theorem estimate,
$$\frac{1}{(k-1)^p} - \frac{1}{k^p}\ \geq \ \frac{p}{k^{p+1}}\ \geq \ \frac{p}{M^{p+1}};$$
thus the gap between any two points of $F_p$ carrying mass is at least $p/M^{p+1}$.
Let $U$ be such that $\delta \leq |U|\leq \delta^\theta$. Then $U$ intersects at most 
$1+ |U|/(p/M^{p+1})  = 1+|U|M^{p+1}/p $ of the points of $F_p$ which have mass $\delta^s$.
Hence 
\begin{eqnarray*} 
\mu_\delta (U)& \leq & \delta^s + \frac{1}{p}|U|\delta^s \delta^{-(s +\theta(1-s))} \\ 
&=&   \delta^s + \frac{1}{p}|U| \delta^{(\theta(s-1))}\\
&\leq& |U|^s + \frac{1}{p} |U||U|^{s-1} \\ 
&=& \left(1+ \frac{1}{p}\right)|U|^s. 
\end{eqnarray*}
From Proposition  \ref{mdp}, $\da F_p \geq s = \theta/(p+\theta)$.
\end{proof}

\subsection{Simple examples exhibiting different phenomena} \label{examplessimple}

We use the convergent sequences $F_p$ from the previous section, the product formulae from Proposition \ref{products}, and that the \emph{upper} intermediate dimensions are finitely stable to construct examples displaying a range of different features which are illustrated in Figure 1.

The natural question which began this investigation is `does $\da$ vary continuously between the Hausdorff and lower box dimension?'.  This indeed happens for the convergent sequences considered in the previous section, but turns out to be false in general.  The first example in this direction is provided by another convergent sequence.
\medskip

\begin{figure}[t]
\centering
\includegraphics[width=0.65\textwidth]{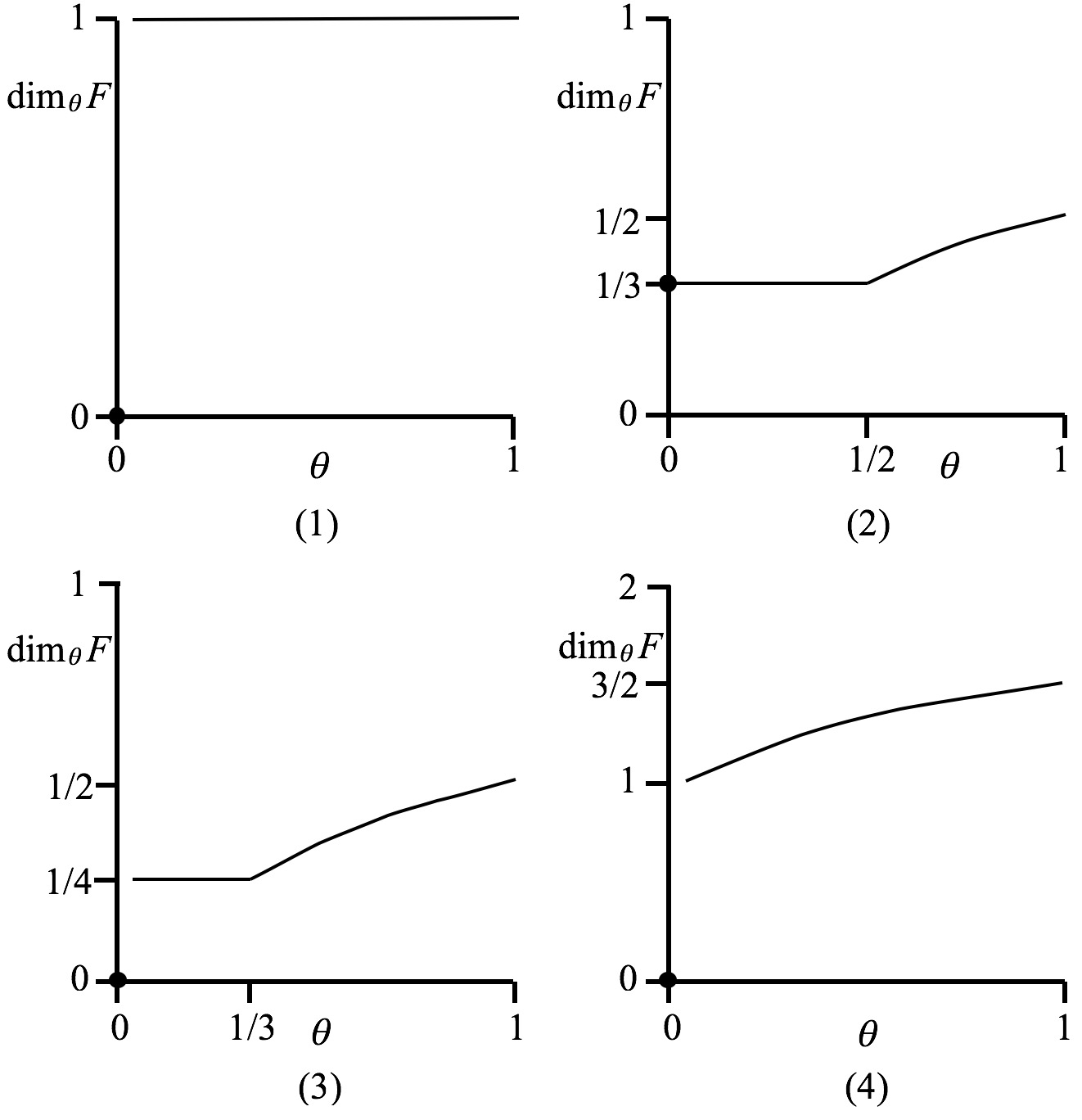}
\caption{Plots of  $\da F$ for the four examples in Section \ref{examplessimple}}
\end{figure}

\emph{Example 1: Discontinuous at 0, otherwise constant.}   Let
\[
F_{\log} = \{ 0, 1/\log 2, 1/\log 3, 1/\log 4, \dots \}.
\]
This sequence converges slower than any of the polynomial sequences $F_p$ and it is well-known and easy to prove that
$\bdd F_{\log} = \dim_\textup{A} F_{\log}= 1$.
It follows from Proposition \ref{assouad} that 
\[
\da F_{\log} = \overline{\dim}_\theta F_{\log} =  1,\qquad \theta \in (0,1].
\]
 Since $\dim_0 F_{\log}=\hdd F_{\log}=0$ there is a discontinuity at $\theta = 0$.
\medskip

\emph{Example 2: Continuous at 0, part constant, part strictly increasing. }  In the opposite direction, it is possible that  $\da F = \hdd F < \lbd F$ for some $\theta >0$.  Indeed, let $F =F_1 \cup E$ where $F_1 = \{0, 1/1, 1/2, 1/3, \dots\}$ as before, and let $E \subset \mathbb{R}$ be any compact set with $\hdd E = \ubd E = 1/3$ (for example an appropriately chosen self-similar set).  Then it is straightforward to deduce that 
\[
\da F = \overline{\dim}_\theta F =  \max\left\{ \frac{ \theta}{1+ \theta},  \, 1/3\right\},\qquad \theta \in [0,1].
\]
It is also possible for $\da F$ to approach a value strictly in between $\hdd F$ and $\lbd F$ as $\theta \to 0$.  This is the subject of the next two examples.
\medskip

\emph{Example 3:  Discontinuous at 0, part constant, part strictly increasing.}  For an example that is constant on an interval adjacent to 0, let $F =E \cup   F_{1}$ where this time $E\subset \mathbb{R}$ is any closed countable set with $\lbd E = \dim_\textup{A} E = 1/4$.  It is immediate  that
\[
\da F = \overline{\dim}_\theta F =  \max\left\{ \frac{ \theta}{1+ \theta},  \, 1/4\right\},\qquad \theta \in (0,1],
\]
with $\hdd F=0$ and $\bdd F=1/2$.

\medskip

\emph{Example 4: Discontinuous at 0, strictly increasing.} Finally, for an example where $\da F$ is smooth, strictly increasing but not continuous at $\theta=0$,  let
\[
F = F_{1} \times F_{\log}\subset \mathbb{R}^2.
\]
Here $\hdd F=0$ and $\bdd F=3/2$ and Proposition \ref{products} gives
\[
\da F = \overline{\dim}_\theta F =  \frac{ \theta}{1 + \theta}+ 1,\qquad \theta \in (0,1],
\]
noting that  $\da F_{\log} = \overline{\dim}_\theta F_{\log} = \lbd F_{\log} = \ubd F_{\log} = \dim_\textup{A} F_{\log}= 1$ for $\theta \in (0,1]$.

\section{Bedford-McMullen carpets} \label{carpetssec}

A well-known class of fractals where the Hausdorff and box dimensions differ are the self-affine carpets; this is a consequence of the alignment of the component rectangles in the iterated construction. The first studies of planar self-affine carpets were by Bedford \cite{bedford} and McMullen \cite{mcmullen} independently, see also \cite{peres} and these Bedford-McMullen carpets have been widely studied and generalised, see for example \cite{Fa1} and references therein.  Indeed, the question of the distribution of scales of covering sets for Hausdorff and box dimensions of Bedford-McMullen carpets was one of our motivations for studying intermediate dimensions. Finding an exact formula for the intermediate dimensions of Bedford-McMullen carpets seems a difficult problem, so here we obtain some lower and upper bounds, which in particular establish continuity at $\theta=0$ and that  the intermediate dimensions take a strict minimum at $\theta=0$. 

We introduce the notation we need, generally following that of McMullen \cite{mcmullen}. Choose integers $n > m \geq 2$.  Let ${I}=\left\{0,\ldots, m-1 \right\}$ and ${J}=\left\{0,\ldots, n-1 \right\}$.  Choose a fixed digit set $D \subseteq I \times J$ with at least two elements.  For $\left( p,q \right)\in D$ we define the affine contraction $S_{\left(p,q \right)}\colon [0,1]^2 \rightarrow [0,1]^2$ by
\[
S_{ \left( p,q \right)}\left(x,y\right) = \left( \frac{x+p}{m},\frac{y+q}{n} \right) .
\]
There exists a unique non-empty compact set $F \subseteq [0,1]^2$ satisfying 
\[
F=\bigcup_{(p,q) \in D}S_{ \left( p,q \right)}(F);
\]
that is $F$ is the attractor of the iterated function system $\ \left\{ S_{ \left(p,q \right)} \right\}_{ \left( p,q \right)\in D}$.  We call such a set $F$ a {\em Bedford-McMullen self-affine carpet}.   It is sometimes convenient to denote pairs in $D$ by $\ell = (p_\ell,q_\ell )$.  

We  model our carpet $F$ via the symbolic space $D^{\mathbb{N}}$, which consists of all infinite words over $D$ and is equipped with the product topology.  
We write $\mathbf{i}  \equiv (i_1,i_2,\ldots)$ for elements of  $D^{\mathbb{N}}$ and  $(i_1,\ldots,i_k)$ for words of length $k$ in $D^k$, where $i_j \in D$.
Then the canonical projection  $\tau :D^{\mathbb{N}} \rightarrow [0,1]^2$ is defined by
\[
\{\tau(\mathbf{i})\}\equiv\{\tau(i_1,i_2,\ldots)\}=\bigcap_{k \in \mathbb{N}} S_{i_1} \circ \cdots  \circ S_{i_k}([0,1]^2).
\]
 This allows us to switch between symbolic and geometric notation since
\[
\tau(D^{\mathbb{N}})=F.
\]

Bedford \cite{bedford} and McMullen \cite{mcmullen} showed that 
\be\label{dimb}
\bdd F = \frac{\log m_0}{\log m} + \frac{\log |D| - \log m_0}{\log n} 
\ee
where $m_0$ is the number of $p$ such that there is a $q$ with $(p,q) \in D$, that is the number of columns of the array containing at least one rectangle.
They also showed that 
\be\label{dimh}
\hdd F = \log \big(\sum_{p=1}^m n_p^{\log_n m} \big) \Big/ \log m,
\ee
where $n_p \, (1\leq p\leq m)$ is the number of $q$ such that $(p,q)\in D$, that is the number of selected rectangles  in the $p$th column of the array.

For each $\ell \in D$ we let $a_\ell$ be the number of rectangles of $D$ in the same column as $\ell$. 
Then, writing  $d = \hdd F$, \eqref{dimh} may be written as
\begin{equation}
m^d \ = \ \sum_{p=1}^m {n}_p^{\log_n m}\ =\  \sum_{\ell\in D} a_{\ell}^{(\log_n m -1)}, 
\end{equation}
where  equality of the sums follows from the definitions of $n_p$ and $a_\ell$.

We assume that the non-zero $n_p$ are not all equal, otherwise $\hdd F = \bdd F$;  in particular this implies that $a := \max_{\ell \in D}a_\ell \geq 2$.

We denote the $k$th-level iterated rectangles by
$$  R_k (i_1,\ldots,i_k)\  =\  S_{i_1}\circ \cdots \circ S_{i_k}([0,1]^2).$$
We also write $R_k(\mathbf{i})  \equiv R_k(i_1,i_2, \ldots)$ for this rectangle when we wish to indicate the $k$th-level iterated rectangle containing the point $\tau(\mathbf{i}) = \tau(i_1,i_2, \ldots)$.

 We will associate a probability vector $\ \left\{ b_{\ell} \right\}_{\ell\in D}$ with $D$ and let $\widetilde{\mu}=\prod_{k\in\mathbb{ N}}\left( \sum_{ i_k\in D}b_{  i_k}\delta_{  i_k}\right)$  be the natural Borel product probability measure on $D^{\mathbb{N}}$, where $\delta_{ \ell}$ is the Dirac measure on $D$ concentrated at $ \ell$.  Then the measure 
\[
\mu=\widetilde{\mu}\circ \tau^{-1}
\]
 is a self-affine measure supported on $F$. 
Following McMullen \cite{mcmullen} we set 
\be\label{pi}
b_\ell = a_\ell^{(\log_n m -1)}/m^d\qquad (\ell \in D),
\ee
noting that $\sum_{\ell\in D} b_\ell  =1$, to get  a measure $\mu$ on $F$; thus  the measures of the iterated rectangles are
\be\label{rect}
\mu\big( R_k (i_1,\ldots,i_k)\big)\ =\  b_{i_1}\cdots b_{i_k}\ 
=\  m^{-kd} ( a_{i_1} \cdots a_{i_k})^{(\log_n m -1)}.
\ee

Approximate squares are  well-known tools in the study  of self-affine carpets.  Given $k\in \mathbb{N}$ let $l(k) = \lfloor k \log_n m\rfloor $ so that 
\be\label{lkbounds}
k\log_n m \ \leq\ l(k) \ \leq k\log_n m \ + \ 1
\ee
For such $k$ and  $\mathbf{i}  = (i_1,i_2, \ldots) \in D^{\mathbb{N}}$ we define  the {\em approximate square} containing $\tau(\mathbf{i})$ as the union of $m^{-k}\times n^{-k}$ rectangles:
\begin{align*}
Q_k(\mathbf{i}) = Q_k(i_1,i_2, \ldots) = \bigcup\Big\{ &R_k(i_1',\ldots,i_k'): p_{i_{j}'}=p_{i_{j}}
\text{ for } j= 1, \ldots, k \\
 &  \text{ and } q_{i_{j}'}=q_{i_{j}}  \text{ for }   j= 1, \ldots, l(k)  \   \Big\},
\end{align*}
recalling that $\ell = (p_\ell,q_\ell)$.
 This approximate square has sides $m^{-k}\times n^{-l(k)}$ where 
$n^{-1}m^{-k} \leq n^{-l(k)}\leq m^{-k}$. 

Note that, by virtue of self-affinity and since the sequence $(p_{i_1},\cdots p_{i_k})$ is the same for all level-$k$ rectangles $R_k (i_1,\ldots,i_k)$ in the same approximate square, the $a_{i_{l(k)+1}}\cdots a_{i_k}$ level-$k$ rectangles that comprise the  approximate square $Q_k(\mathbf{i})$  all have equal $\mu$-measure. Thus, writing $L= \log_n\! m$,
\begin{eqnarray}
\mu(Q_k(\mathbf{i})) &=&   m^{-kd} a_{i_1}^{(L -1)}\cdots a_{i_k}^{(L -1)}
\times a_{i_{l(k)+1}}\cdots a_{i_k}\label{squaremes1}\\
&=&  m^{-kd} a_{i_1}^{L}\cdots a_{i_{k}}^{L}
\times a_{i_{1}}^{-1}\cdots a_{i_{l(k)}}^{-1}.\label{squaremes}
\end{eqnarray}

We now obtain an upper bound for  $\overline{\dim}_\theta F$ which implies continuity at $\theta = 0$ and so on $[0,1]$.  Recall that $a = \max_{\ell \in D}a_\ell \geq 2$.

\begin{prop}\label{propup}
Let $F$ be the Bedford-McMullen carpet as above. Then for $0<\theta <\frac{1}{4}(\log_n\! m)^2$,
\be\label{upbound}
 \overline{\dim}_\theta F \ \leq \  \hdd F +  \bigg(\frac{2\log(\log_m n)\log a}{\log n}\bigg) \frac{1}{-\log \theta}.
 \ee
In particular, $ \underline{\dim}_\theta F$ and $ \overline{\dim}_\theta F$ are continuous at $\theta = 0$ and so are continuous on $[0,1]$.
\end{prop}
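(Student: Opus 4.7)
\emph{Plan.} The approach is to cover $F$ with approximate squares at a \emph{nested hierarchy} of scales $m^{-k_0},m^{-k_1},\ldots,m^{-k_J}$, where the depth $J$ of the hierarchy depends on $\theta$. Fix small $\delta>0$ and let $k$ be the integer with $m^{-k}\le\delta<m^{-k+1}$. Write $L=\log_n m\in(0,1)$ and set $k_0=k$, $k_{i+1}=\lceil k_i/L\rceil$, taking $J$ to be the largest integer with $m^{-k_J}\ge\delta^{1/\theta}$. Since $k_i\sim kL^{-i}$, this yields $J\asymp \log(1/\theta)/(2\log(1/L))$, and $J\ge 1$ under the hypothesis $\theta<L^2/4$. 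The key structural identity driving the construction is $l(k_{i+1})=k_i$, so level-$k_{i+1}$ approximate squares nest cleanly inside level-$k_i$ ones: each $Q_{k_i}(\mathbf{i})$ contains at most $m_0^{k_{i+1}-k_i}\prod_{j=l(k_i)+1}^{k_i}a_{i_j}\le m_0^{k_{i+1}-k_i}a^{k_i-l(k_i)}$ level-$k_{i+1}$ sub-squares.

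Next I would define an adaptive cover of $F$ by iterating this refinement: starting from the level-$k_0$ approximate squares meeting $F$, at each stage decide whether to retain the current set or replace it by its level-$k_{i+1}$ sub-squares, making the choice that minimises the local contribution to $\sum|U|^s$. The resulting cover consists of approximate squares at levels in $\{k_0,\ldots,k_J\}$, all with diameters in $[\delta^{1/\theta},\delta]$. To estimate the total sum I would invoke McMullen's measure $\mu$ from \eqref{rect}; by \eqref{squaremes}, the ``Hausdorff part'' $\sum\mu(Q)\,|Q|^{s-d}$ is controlled by $\mu(F)=1$ at $s=d$, while the excess factor $a^{k_i-l(k_i)}$ counting sub-squares is amortised across the $J$ refinement levels. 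A careful bookkeeping then shows that the total sum is bounded whenever
\[
s \ \ge \ \hdd F + \frac{C\,\log a}{J\,\log n}
\]
for a constant $C$; substituting $J\asymp \log(1/\theta)/(2\log(1/L))$ reproduces \eqref{upbound}.

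The continuity of $\da F$ and $\uda F$ at $\theta=0$ is then immediate, since the right-hand side of \eqref{upbound} tends to $\hdd F$ as $\theta\to 0^+$ and we always have $\hdd F\le\da F\le\uda F$; combining with Proposition \ref{cty} gives continuity on $[0,1]$. The main obstacle I anticipate is making the amortisation of the multiplicative excess across the $J$ refinement levels precise: one needs a Chebyshev-type separation of approximate squares into those whose ``type'' $\prod a_{i_j}$ is close to its $\mu$-typical value (which form the bulk) and atypical ones (few enough to be absorbed into the error). The rounding inherent in $k_{i+1}=\lceil k_i/L\rceil$ also has to be tracked carefully as it propagates through the $J$ iterations.
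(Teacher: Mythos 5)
Your scaffolding coincides with the paper's: the chain of scales $k_{i+1}=\lceil k_i/L\rceil$ with $l(k_{i+1})=k_i$, the $\asymp\log(1/\theta)/\log(1/L)$ levels available between $\delta$ and $\delta^{1/\theta}$, McMullen's measure $\mu$, and the target bound $s\geq d+C\log a/(J\log n)$ are all exactly the right ingredients. But the step you defer to ``careful bookkeeping'' --- the amortisation of the excess factor over the $J$ levels --- is the entire content of the proposition, and neither mechanism you propose supplies it. First, the greedy rule (``retain or subdivide, whichever minimises the local contribution to $\sum|U|^s$'') points the wrong way: for $s$ near $\dimh F$, hence below $\bdd F$, subdividing a square into its sub-squares typically \emph{increases} the local sum, so the greedy cover degenerates towards the single coarse scale, which only yields the box-dimension bound. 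Second, a Chebyshev/Egorov split into $\mu$-typical and atypical squares controls the atypical part in \emph{measure}, not in \emph{cardinality}; at any fixed level the squares with $\mu(Q)\ll m^{-kd}$ can be exponentially numerous (they are precisely why $\bdd F>\hdd F$), and in an upper (covering) bound you must cover every point of $F$, so a small-measure exceptional set cannot be ``absorbed into the error''. That device is what the paper uses for the \emph{lower} bound (Lemma \ref{mcmass} feeding the mass distribution principle), where discarding a set of small $\mu$-measure is harmless; it does not work for the upper bound.

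The missing idea is a \emph{pointwise} pigeonhole along your chain of scales. Writing $\mu(Q_k(\mathbf{i}))=m^{-kd}f_k(\mathbf{i})^{Lk}\cdot(\text{bounded factor})$ as in \eqref{rewritemes}, where $f_k(\mathbf{i})$ is the ratio of the prefix geometric means of $a_{i_1},\dots,a_{i_k}$ at scales $k$ and $l(k)$, the identity $l(k_{i+1})=k_i$ makes the ratios $f_{k_1},\dots,f_{k_N}$ telescope; since every prefix geometric mean lies in $[1,a]$, their product is bounded, so for \emph{every} $\mathbf{i}$ at least one $k$ in the admissible range satisfies $f_k(\mathbf{i})\geq a^{-c/N}$ with $N\gtrsim\log(1/\theta)/\log(1/L)$ --- this is the claim \eqref{claim} in the paper, and it is where the hypothesis $\theta<\tfrac14 L^2$ and the precise constant in \eqref{upbound} come from. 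Consequently every point lies in a ``good'' approximate square with $\mu(Q_k)\geq m^{-k(d+\epsilon(\theta))}$ at some scale between $\delta$ and $\delta^{1/\theta}$; by nestedness one extracts a disjoint subfamily of good squares covering $F$, and comparing with $\mu(F)=1$ gives $\sum_Q|Q|^{d+\epsilon(\theta)}\lesssim 1$, hence the bound. Without this pointwise selection (it is deterministic, valid for all $\mathbf{i}$, not just $\mu$-typical ones), your plan does not close; with it, your adaptive-cover language becomes exactly the paper's argument.
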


\begin{proof}
For $\mathbf{i}  = (i_1,i_2,\ldots)$, rewriting  \eqref{squaremes} gives
\be
\mu\big(Q_k(\mathbf{i})\big) =   m^{-kd} 
\bigg(\frac{ (a_{i_1}\cdots a_{i_k})^{1/k}}{(a_{i_1}\cdots a_{i_{l(k)}})^{1/l(k)}}\bigg)^{Lk}
\big(a_{i_1}\cdots a_{i_{l(k)}}\big)^{(kL/l(k))-1}.
\label{rewritemes}
\ee
We consider the two bracketed terms on the right-hand side of \eqref{rewritemes} in turn. We show that  the first term cannot be too small for too many consecutive $k$ and that the second term is bounded below by $1$.

For $k>1$ with $l(k) =\lfloor k \log_n m\rfloor$ as usual, define
\be\label{fkstar}
f_k(\mathbf{i})\ \equiv\ f_k(i_1,i_2,\ldots)  
 \ =\ \bigg(\frac{ (a_{i_1}\cdots a_{i_k})^{1/k}}{(a_{i_1}\cdots a_{i_{l(k)}})^{1/l(k)}}\bigg)
\ee
We claim that for all $K\geq L/(1-L)$ and all $\mathbf{i} = (i_1,i_2,\ldots)  \in D^{\mathbb{N}}$,   there exists   $k$  with $K\leq k \leq K/ \theta$   such that  
\be\label{claim}
 f_k(\mathbf{i})\ \geq \ a^{\log L/\log(L/2\theta)}.
\ee
 Suppose this is false for some $(i_1,i_2,\ldots)$ and $K\geq 1/L(L-1)$, so for all $K\leq k \leq K/ \theta$, $  f_k(\mathbf{i}) < \lambda:= a^{\log L/\log(L/2\theta)}$, that is
\be\label{basin}
(a_{i_1} a_{i_2}\cdots a_{i_k})^{1/k}\ <\ \lambda (a_{i_1} a_{i_2}\cdots a_{i_l})^{1/l(k)}.
\ee
Define a sequence of integers $k_{r}\, (r=0,1,2,\ldots)$ inductively by $k_{0} = K$ and  for $r \geq 1$ taking $k_{r}$ to be the least integer such that 
$ \lfloor k_{r} \log_n m \rfloor = k_{r-1}$. 
Then  $k_{r} \leq  k_{r-1}/L  +1$, and a simple induction shows that
$$k_{r}\  \leq \  L^{-r}\big(K + L/(1-L)\big)-L/(1-L) \ \leq\  L^{-r}\big(K + L/(1-L)\big)\qquad (r\geq 0).$$
Fix $N$ to be the greatest integer such that $k_N \leq K/\theta$. Then
$$ L^{-(N+1)} \big(K+L\big/(1 -L)\big)\ \geq\ k_{N+1}\ >\ K/\theta$$
so rearranging, provided $K\geq L/(1-L)$, 
$$L^N\ <\  \frac{\theta\big(K + L\big/(1-L)\big)}{LK}\ \leq \ \frac{2\theta}{L}, 
$$
that is
\be\label{ineq1}
N > \frac{\log(2\theta/L)}{\log L}.
\ee
From \eqref{basin}
$$
(a_{i_1} a_{i_2}\cdots a_{i_{k_r}})^{1/{k_r}}\ <\ \lambda (a_{i_1} a_{i_2}\cdots a_{i_{k_{r-1}}})^{1/k_{r-1}} \qquad (1\leq r\leq N)
$$
so iterating
$$
 1\ \leq\ (a_{i_1} a_{i_2}\cdots a_{i_{k_N}})^{1/k_N}\ <\ \lambda^{N} (a_{i_1} a_{i_2}\cdots a_{i_{K}})^{1/K}\ \leq \ \lambda^{N}a.
$$
Combining with \eqref{ineq1} we obtain
$$
\lambda \ >\ a^{-1/N} \geq a^{\log L/\log(L/2\theta)}
$$
which contradicts the definition of $\lambda$. Thus the claim \eqref{claim} is established.

For the second bracket on the right-hand side of \eqref{rewritemes} note that
$$ 0\ \leq\ (kL/l(k))-1\ =\ \frac{ k\log_n m -\lfloor k\log_n m \rfloor}{l(k)}\ \leq \frac{1}{l(k)}$$
so that 
\be\label{brac2}
1\ \leq \  \big(a_{i_1}\cdots a_{i_{l(k)}}\big)^{(kL/l(k))-1}\  \leq a.
\ee

Putting together \eqref{rewritemes}, \eqref{claim} and \eqref{brac2}, we conclude that for all $K\geq L/(1-L)$ and all $\mathbf{i}  \in D^{\mathbb{N}}$
there exists  $ K\leq k \leq K/ \theta $ such that 
$$\mu\big(Q_k(\mathbf{i})\big) \ \geq\     m^{-dk} a^{kL\log L/\log(L/2\theta)}
 \ \geq\     m^{-dk} a^{2kL\log L/\log(1/\theta)}
 \ =\ m^{-k(d+\epsilon(\theta))},$$
as $\theta \leq L^2/4$, where
$$\epsilon(\theta)\ =\  \frac{ -(\log a)\, 2L\log L}{\log m \log(1/\theta)} \ 
=\  \frac{2 \log(\log_m n)\log a}{\log n} \frac{1}{-\log \theta}.$$

Geometrically this means that for $K\geq L/(1-L)$ every point $z \in F$ belongs to at least one approximate square, $Q_{k(z)}$ say, with $K \leq {k(z)} \leq K/\theta$ and with $\mu(Q_{k(z)}) \geq m^{-k(d+\epsilon(\theta))}$. Since the approximate squares form a nested hierarchy we may choose a subset  $\{Q_{k(z_n)}\}_{n=1}^N \subset \{ Q_{k(z)}: z\in F\}$ that is disjoint (except possibly at boundaries of approximate squares) and which cover $F$. Thus
$$1\ =\ \mu(F)\ = \ \sum_{n=1}^N \mu (Q_{k(z_n)})\ \geq\   \sum_{n=1}^N m^{-k(z_n)(d+\epsilon(\theta))} \ \geq\   \sum_{n=1}^N (2^{-1/2}|Q_{k(z_n)}|)^{(d+\epsilon(\theta))} $$
where  $| Q_k|$ denotes the diameter of the approximate square $Q_k$, noting that $| Q_k|\leq 2^{1/2} m^{-k}$. It follows that $ \overline{\dim}_\theta F \leq d+\epsilon(\theta)$ as claimed.
\end{proof}

The following lemma brings together some basic estimates that we will need to obtain a lower bound for the intermediate dimensions of $F$.

\begin{lem}\label{mcmass}
Let $\epsilon >0$. There exists $K_0\in \mathbb{N}$ and a set $E\subset F$ with  $\mu(E) \geq \frac{1}{2}$ such that for all $\mathbf{i}$ with $\tau(\mathbf{i})\in E$ and $k\geq K_0$,
\begin{equation}\label{uppermass}
\mu(Q_k(\mathbf{i}))\leq m^{-k(d-\epsilon)}
\end{equation}
and
\begin{equation}\label{lowermass}
\mu(R_k(\mathbf{i}))\geq \exp(-k(H(\mu)+\epsilon)),
\end{equation}
where $d=\dim_\textup{H} F$ and $H(\mu) \in (0,\log |D|)$ is the entropy of the measure $\mu$.
\end{lem}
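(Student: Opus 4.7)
The plan is to derive both bounds as almost-sure statements under the Bernoulli measure $\widetilde{\mu}$ on $D^{\mathbb{N}}$ via the strong law of large numbers (SLLN), and then combine them through continuity of measure to extract a single set $E\subset F$ of $\mu$-measure at least $1/2$ on which both \eqref{uppermass} and \eqref{lowermass} hold uniformly for $k\geq K_0$.

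For \eqref{lowermass}, note that \eqref{rect} gives $\mu(R_k(\mathbf{i}))=b_{i_1}\cdots b_{i_k}$, so
$$-\tfrac{1}{k}\log\mu(R_k(\mathbf{i}))=\tfrac{1}{k}\sum_{j=1}^{k}(-\log b_{i_j}).$$
Under $\widetilde{\mu}$ the coordinates $(i_j)$ are i.i.d.\ with distribution $\{b_\ell\}_{\ell\in D}$, so the SLLN gives
$$\tfrac{1}{k}\sum_{j=1}^{k}(-\log b_{i_j})\ \longrightarrow\ H(\mu)\ =\ -\sum_{\ell\in D}b_\ell\log b_\ell$$
for $\widetilde{\mu}$-a.e.\ $\mathbf{i}$. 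Hence \eqref{lowermass} holds for all $k\geq K_1(\mathbf{i})$ for a.e.\ $\mathbf{i}$.

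For \eqref{uppermass}, take logarithms in \eqref{squaremes} to obtain
$$\log\mu(Q_k(\mathbf{i}))\ =\ -kd\log m\,+\,L\sum_{j=1}^{k}\log a_{i_j}\,-\,\sum_{j=1}^{l(k)}\log a_{i_j},$$
with $L=\log_n m$. Let $A=\sum_{\ell\in D}b_\ell\log a_\ell$. By SLLN we have $k^{-1}\sum_{j=1}^{k}\log a_{i_j}\to A$ almost surely, and since $l(k)\to\infty$ also $l(k)^{-1}\sum_{j=1}^{l(k)}\log a_{i_j}\to A$ almost surely. The key (and only) nontrivial point is a cancellation: writing the two sums as $kA+o(k)$ and $l(k)A+o(l(k))$ and using the elementary bound $|Lk-l(k)|\leq 1$ from \eqref{lkbounds}, the combined quantity $L\sum_{j=1}^{k}\log a_{i_j}-\sum_{j=1}^{l(k)}\log a_{i_j}$ equals $(Lk-l(k))A+o(k)=O(1)+o(k)$, which is eventually $\leq k\epsilon\log m$. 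This yields \eqref{uppermass} for all $k\geq K_2(\mathbf{i})$ for a.e.\ $\mathbf{i}$.

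Finally, let $G_N\subseteq D^{\mathbb{N}}$ be the measurable set of $\mathbf{i}$ for which both \eqref{uppermass} and \eqref{lowermass} hold for every $k\geq N$. The previous two steps give $G_N\subseteq G_{N+1}$ and $\widetilde{\mu}\bigl(\bigcup_N G_N\bigr)=1$, so by continuity of measure there exists $K_0$ with $\widetilde{\mu}(G_{K_0})\geq\tfrac{1}{2}$. Setting $E=\tau(G_{K_0})\subseteq F$ gives $\tau^{-1}(E)\supseteq G_{K_0}$, hence $\mu(E)=\widetilde{\mu}(\tau^{-1}(E))\geq\widetilde{\mu}(G_{K_0})\geq\tfrac{1}{2}$, as required. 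The main obstacle to watch for is the cancellation in the $Q_k$-estimate — a naive bound of the two sums separately would only give $o(k)\cdot L k$, not $o(k)$, so one must exploit that the same partial sums appear in both terms and that $|Lk-l(k)|\leq 1$.
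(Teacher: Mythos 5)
Your proof is correct, and it differs from the paper's at exactly the two places where the paper takes shortcuts. For \eqref{uppermass} the paper simply cites McMullen's Lemmas 3 and 4(a), which give the almost-sure convergence $\mu(Q_k(\mathbf{i}))^{1/k}\to m^{-d}$; you rederive this from the strong law of large numbers applied to $\log a_{i_j}$, and the cancellation you flag — that the combination $L\sum_{j\leq k}\log a_{i_j}-\sum_{j\leq l(k)}\log a_{i_j}$ contributes only $(Lk-l(k))A+o(k)=O(1)+o(k)$ because $l(k)=\lfloor kL\rfloor$ — is indeed the crux and is exactly the content of McMullen's computation. For \eqref{lowermass} the two arguments coincide in substance: the paper invokes Shannon--McMillan--Breiman, which for the Bernoulli measure $\widetilde{\mu}$ reduces to the SLLN calculation you give. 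For upgrading the almost-sure statements to a set of measure at least $\tfrac12$ with a uniform threshold $K_0$, the paper uses Egorov's theorem twice (on sets of $\widetilde{\mu}$-measure $\tfrac34$), while your monotone exhaustion $G_N\subseteq G_{N+1}$, $\widetilde{\mu}(\bigcup_N G_N)=1$, plus continuity of measure is more elementary and is all that is needed, since only a fixed $\epsilon$ is required rather than uniform convergence. So your version is self-contained where the paper outsources the $Q_k$ asymptotics; the paper's is shorter. One small point worth a sentence in your write-up: the measurability of $E=\tau(G_{K_0})$ — each defining condition of $G_{K_0}$ depends only on finitely many coordinates, so $G_{K_0}$ is a countable intersection of clopen sets, hence compact, and $E$ is compact and Borel, making $\mu(E)=\widetilde{\mu}(\tau^{-1}(E))\geq\widetilde{\mu}(G_{K_0})$ legitimate (the paper's $E=\tau(\widetilde{E}_1\cap\widetilde{E}_2)$ glosses over the same issue).
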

\begin{proof}
McMullen \cite[Lemmas 3,4(a)]{mcmullen} shows that for $\widetilde{\mu}$-almost all $\mathbf{i}  \in D^{\mathbb{N}}$
$$\lim_{k\to\infty}  \mu(Q_k(\mathbf{i}))^{1/k} \to m^{-d}.$$
Thus by Egorov's theorem we may find a set $\widetilde{E}_1\subset D^{\mathbb N}$ with $\widetilde{\mu}(\widetilde{E}_1) \geq \frac{3}{4}$, and 
$K_1\in \mathbb{N}$ such that \eqref{uppermass} holds for all $\mathbf{i}  \in \widetilde{E}_1$ and $k\geq K_1$.

Furthermore, it is immediate from the Shannon-MacMillan-Breimann Theorem and \eqref{rect} that for $\widetilde{\mu}$-almost all $\mathbf{i}  \in D^{\mathbb{N}}$, 
$$\lim_{k\to\infty}  \mu(R_k(\mathbf{i})))^{1/k} \to  \exp(-H(\mu)),$$
and again by Egorov's theorem there is a set $\widetilde{E}_2\subset D^{\mathbb N}$ with $\widetilde{\mu}(\widetilde{E}_2) \geq \frac{3}{4}$, and 
$K_2\in \mathbb{N}$ such that \eqref{lowermass} holds for all $\mathbf{i}  \in \widetilde{E}_2$ and $k\geq K_2$.
The conclusion of the lemma follows taking $E =\tau(\widetilde{E}_1\cap \widetilde{E}_2)$ and $K_0 = \max\{K_1,K_2\}$.
\end{proof}

We now obtain a lower bound for $\underline{\dim}_{\theta}F$, showing in particular that $\underline{\dim}_{\theta}F> \hdd F$ for all  $\theta>0$.

\begin{prop}\label{proplb}
Let $F$ be the Bedford-McMullen carpet as above. Then for $0\leq \theta \leq 1$, 
\begin{equation}\label{lowerbnd}
\underline{\dim}_{\theta}F\geq \hdd F+\theta\frac{\log |D|-H(\mu)}{\log m}.
\end{equation}
\end{prop}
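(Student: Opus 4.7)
The strategy is to apply the mass distribution principle (Proposition \ref{mdp}). For each sufficiently small $\delta$, we construct a probability measure $\mu_\delta$ on $F$ satisfying $\mu_\delta(U)\leq c|U|^{s-O(\epsilon)}$ for every Borel $U$ with $\delta\leq |U|\leq \delta^\theta$, where $s=\hdd F+\theta(\log|D|-H(\mu))/\log m$. Since $\hdd F=d<s$ and no single probability measure on $F$ can satisfy a Frostman-type bound with exponent greater than $d$ at every scale, the measure $\mu_\delta$ must genuinely depend on $\delta$ (as Proposition \ref{mdp} allows).

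The setup: fix small $\epsilon>0$ and apply Lemma \ref{mcmass} to obtain the good set $E\subset F$ with $\mu(E)\ge\tfrac12$ and threshold $K_0$. For $0<\delta\leq m^{-K_0}$, let $k_1$ satisfy $m^{-k_1}\leq\delta<m^{-k_1+1}$ and set $k_2=\lceil\theta k_1\rceil$, so that the scale range $[\delta,\delta^\theta]$ corresponds to level-$k$ approximate squares for $k_2\leq k\leq k_1$.

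Construction of $\mu_\delta$: build a ``hybrid'' measure that is approximately uniform over $D$ for the first $k_2$ symbolic coordinates and McMullen-weighted (weights $b_\ell$) for coordinates $k_2+1,\dots,k_1$; the first part spreads mass out at the coarse scale $\delta^\theta$, while the second retains McMullen-like structure at the fine scale $\delta$. Concretely, consider the cylinder weights
\[
\mu_\delta(R_{k_1}(\bi))=|D|^{-k_2}\prod_{j=k_2+1}^{k_1}b_{i_j},
\]
pushed forward under $\tau$, and then restricted so that the tail $(i_{k_2+1},i_{k_2+2},\ldots)$ corresponds, via the projection on the sub-carpet, to a point in $E$; this permits pointwise application of Lemma \ref{mcmass}.

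Frostman estimate and conclusion: for $U$ with $|U|=r=m^{-k}$ and $k_2\leq k\leq k_1$, cover $U\cap\mathrm{supp}(\mu_\delta)$ by $O(1)$ level-$k$ approximate squares $Q_k$; the mass $\mu_\delta(Q_k(\bi))$ factorises, by the Bernoulli structure, into a product over the coordinate ranges $[1,l(k)]$ (fully fixed), $(l(k),k]$ (column fixed), and $(k,k_1]$ (free), each of which is controlled by the appropriate uniform or McMullen column sum. Combining the approximate-square upper bound \eqref{uppermass} applied at level $k_2$ with the rectangle lower bound \eqref{lowermass} at level $k_1$ (used to count, rather than weight, the rectangles of the good set contained in each $Q_k$) yields $\mu_\delta(U)\leq c r^{s-O(\epsilon)}$, so Proposition \ref{mdp} gives $\underline{\dim}_\theta F\geq s-O(\epsilon)$; letting $\epsilon\to 0$ completes the proof. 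The main obstacle is obtaining this Frostman bound \emph{uniformly} over all Borel $U$ rather than merely for $\mu_\delta$-typical positions, which forces the restriction of $\mathrm{supp}(\mu_\delta)$ described above; the underlying exponent bookkeeping hinges on the inequality $H(\mu)\geq d\log m$, a consequence of McMullen's entropy formula combined with Jensen's inequality.
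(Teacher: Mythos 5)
Your measure is the right one --- it is exactly the paper's $\nu_K$ with $K=k_2\approx\theta k_1$ (uniform weights $|D|^{-1}$ on the first $k_2$ symbols, McMullen weights $b_\ell$ afterwards) --- but the key Frostman estimate is garbled at the point where the exponent is actually produced. What one must bound is $\mu_\delta(Q_k(\bi))$ for \emph{every} $k\in[k_2,k_1]$, and the computation that closes rests on the identity $\mu_\delta(Q_k(\bi))=\frac{|D|^{-k_2}}{\mu(R_{k_2}(\bi))}\,\mu(Q_k(\bi))$ (valid because $b_\ell$ depends only on the column of $\ell$), combined with \eqref{uppermass} applied at level $k$ for each $k$ in the range and \eqref{lowermass} applied at the \emph{coarse} level $k_2$; this gives $\mu_\delta(Q_k(\bi))\le m^{-k(d-\epsilon)}e^{-k_2(\log|D|-H(\mu)-\epsilon)}$, and since $k_2\ge\theta k$ the exponent $s-O(\epsilon)$ follows. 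You invoke the two estimates at the opposite levels: \eqref{uppermass} at level $k_2$ only controls $\mu(Q_{k_2}(\bi))$, and by nesting this yields $\mu(Q_k(\bi))\le m^{-k_2(d-\epsilon)}$, which is far too weak when $k$ is near $k_1=k_2/\theta$; and using \eqref{lowermass} at level $k_1$ to \emph{count} level-$k_1$ rectangles in $Q_k$ and then multiplying by the largest cylinder mass $|D|^{-k_2}(\max_\ell b_\ell)^{k_1-k_2}$ is lossy, since $\max_\ell b_\ell\ge e^{-H(\mu)}$ with equality only in the uniform case. So the bookkeeping as described does not yield $s-O(\epsilon)$. (Incidentally, the inequality the argument really hinges on is $H(\mu)\le\log|D|$, which makes $k=k_1$ the worst case; $H(\mu)\ge d\log m$ plays no role.)

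The second gap is the handling of the good set. The quantities you need to control, $\mu(Q_k(\bi))$ for $k_2\le k\le k_1$ and $\mu(R_{k_2}(\bi))$, are functions of the \emph{initial} block of coordinates (including the first $k_2$, where $\mu_\delta$ is uniform), and Lemma \ref{mcmass} supplies \eqref{uppermass}--\eqref{lowermass} only for sequences $\bi$ with $\tau(\bi)\in E$, a condition on the whole word from the first coordinate. Restricting the support of $\mu_\delta$ by demanding that the \emph{tail} $(i_{k_2+1},i_{k_2+2},\dots)$ lie over $E$ therefore constrains the wrong coordinates and does not permit a ``pointwise application of Lemma \ref{mcmass}'' (at best it would require proving shifted analogues, which you do not do). Worse, Proposition \ref{mdp} needs the measure to retain mass bounded away from $0$ on the set where the Frostman bound holds, and Lemma \ref{mcmass} only says that set is large in $\mu$-measure, not in $\mu_\delta$-measure. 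The paper's proof devotes its second half to exactly this: it shows the set $E'$ on which the final bound \eqref{munu} holds is determined by the coordinates $k_2+1,\dots,\lfloor k_2/\theta\rfloor$ alone, on which $\mu$ and $\nu_{k_2}$ induce the same Bernoulli $(b_\ell)$ distribution, so that $\nu_{k_2}(E')=\mu(E')\ge\frac12$. This mass-transfer step is missing from your argument and is not supplied by the tail restriction, so as written the appeal to Proposition \ref{mdp} is not justified.
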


\begin{proof}
Fix $\theta\in(0,1)$, let $E\subset F$ and  $K_0$ be given by Lemma \ref{mcmass}, and let $K\geq K_0$. We define a measure $\nu_K$ which assigns equal mass to all level-$K$ rectangles, and then subdivides this mass among sub-rectangles using the same weights as for the measure $\mu$, given by \eqref{pi}. This gives a measure to which we can apply the mass distribution principle, Lemma \ref{mdp}. Thus for $k\geq K$, writing $b_\ell = a_\ell^{(\log_n m -1)}/m^d$ as in \eqref{pi}, 
\begin{equation}\label{defnuk}
\nu_K\big( R_k (i_1,\ldots,i_k)\big)\ :=\  |D|^{-K}b_{i_{K+1}}\cdots b_{i_k}\ 
=\  |D|^{-K} m^{-(k-K)d} ( a_{i_{K+1}} \cdots a_{i_k})^{L-1}.
\end{equation}

We now consider an approximate square $Q_k(\mathbf{i})$ containing the point $\mathbf{i}$. This approximate square is a union of rectangles $R_k(\mathbf j)$ which, as explained in our comment before \eqref{squaremes1}, each have the same $\mu$ measure equal to $\mu(R_k(\mathbf i))$. The same argument gives that for any $R_k(\mathbf j)\subset Q_k(\mathbf i)$
\[
\nu_K(R_k(\mathbf j))=\nu_K(R_k(\mathbf i))=\mu(R_k(\mathbf i)) \dfrac{|D|^{-K}}{\mu(R_K(\mathbf i))}
\]
where the final equality holds since the formula for $\nu_{K}$ differs from that of $\mu$ only in the mass it assigns according to the first $K$ letters. Putting this together allows one to express the $\nu_{K}$-measure of an approximate square of side length $m^{-k}$  in relation to the $\mu$-measure of such a square. For $\tau(\mathbf{i})\in E$ and $k\geq K$, the approximate square $Q_k(\mathbf{i})$ that contains the point $\mathbf{i}$ has $\nu_K$-measure
\begin{eqnarray}
\nu_{K}(Q_k(\mathbf{i}))&=&\dfrac{|D|^{-K}}{\mu(R_K(\mathbf{i}))}\mu(Q_k(\mathbf{i}))\label{nukmu}\\
&\leq& \dfrac{|D|^{-K} m^{-k(d-\epsilon)}}{\exp(-K(H(\mu)+\epsilon))},\nonumber
\end{eqnarray}
using Lemma \ref{mcmass}. (Alternatively \eqref{nukmu} may be verified directly using \eqref{rect}, \eqref{squaremes} and \eqref{defnuk}.)
Then 
\begin{eqnarray}
\nu_{K}(Q_k(\mathbf{i}))& \leq & m^{-k(d-\epsilon) -K \log_m \big(|D|\exp(-H(\mu)-\epsilon)\big)}\nonumber\\
&\leq& m^{-k\big(d-\epsilon +\frac{K}{k}(\log |D|-H(\mu)-\epsilon)/\log m\big)}\label{inequal}.
\end{eqnarray}
We need bounds that are valid for all $k\in[K,K/\theta]$ corresponding to approximate squares of sides between (approximately) $m^{-K}$ and $m^{-K/\theta}$. The exponent in \eqref{inequal} is maximised when $k  = K/\theta$, so that 
\begin{equation}\label{munu}\nu_{K}(Q_k(\mathbf{i}))\ \leq  \ m^{-k\big(d-\epsilon+\theta(\log |D|-H(\mu)-\epsilon)/\log m\big)}\end{equation}
for all $\mathbf{i}$ with $\tau(\mathbf{i})\in E$ and integers $k\in[K,K/\theta]$, where $\mu (E) \geq \frac{1}{2}$.

To use our mass distribution principle we need equation \eqref{munu} to hold on a set of $\mathbf{i}$ of large $\nu_K$ mass, whereas currently we have that it holds on a set $E$ of large $\mu$ mass. 
Let 
$$E'=\tau\{\mathbf{i} : \mbox{inequality \eqref{munu} is satisfied for all } k\in[K,K/\theta] \}.$$
 Firstly we observe that $Q_k(\mathbf{i})$ depends only on $(i_1,\ldots i_k)$, and we are dealing with $k\leq K/\theta$, so the question of whether $\tau(\mathbf{i})\in E'$ is independent of $(i_{\lfloor K/\theta\rfloor+1}, i_{\lfloor K/\theta\rfloor+2},\ldots)$. Secondly, since $Q_k(\mathbf{i})$ is a union of rectangles $R_k(\mathbf{i})$, and $\nu_K\big( R_k (i_1,i_2,\ldots))$ is independent of $(i_1,\ldots ,i_K)$, the question of whether $\tau(\mathbf{i})\in E'$ is independent of $(i_1,\ldots ,i_K)$. Thus we can write
\[
E'=\bigcup_{i_{K+1}\ldots i_{\lfloor K/\theta\rfloor}\in I''} \bigg(\bigcup_{i_1\ldots i_K\in D^K} R_{\lfloor K/\theta\rfloor}(i_1,\ldots,i_{\lfloor K/\theta\rfloor})\bigg)
\]
for some set $I''\subset D^{\lfloor K/\theta\rfloor-K}$.
But using \eqref{defnuk}  gives
\begin{eqnarray*}
\nu_K \bigg(\bigcup_{i_1\ldots i_K\in D^K} R_{\lfloor K/\theta\rfloor}(i_1,\ldots, i_{\lfloor K/\theta\rfloor})\bigg)&=& \sum_{i_1\ldots i_K\in D^K}\frac{1}{|D|^K}b_{i_{K+1}}\cdots b_{i_{\lfloor K/\theta\rfloor}}\\
&=& b_{i_{K+1}}\cdots b_{i_{\lfloor K/\theta\rfloor}}
\end{eqnarray*}
and  \eqref{rect}  gives
\begin{eqnarray*}
\mu \bigg(\bigcup_{i_1\ldots i_K\in D^K} R_{\lfloor K/\theta\rfloor}(i_1,\ldots, i_{\lfloor K/\theta\rfloor})\bigg)&=& \sum_{i_1\ldots i_K\in D^K}b_{i_1}\cdots b_{i_K}b_{i_{K+1}}\cdots b_{i_{\lfloor K/\theta\rfloor}}\\
&=& \bigg(\sum_{i_1\ldots i_K\in D^K}b_{i_1}\cdots b_{i_K}\bigg)b_{i_{K+1}}\cdots b_{i_{\lfloor K/\theta\rfloor}}\\
&=& b_{i_{K+1}}\cdots b_{i_{\lfloor K/\theta\rfloor}},
\end{eqnarray*}
as $\sum_{i\in D} b_i  =1$. Since these quantities are equal we conclude that
\[
\nu_K(E')\ =\ \mu(E')\ \geq\ \mu(E)\ \geq\ {\textstyle \frac{1}{2}}
\] 
as required.

Since \eqref{munu} holds for all $\mathbf{i}  \in E'$, a straightforward variant on our mass distribution principle, where we use approximate squares instead of balls, gives
\[
\underline{\dim}_{\theta}F\ \geq\  \hdd F-\epsilon +\theta\frac{\log |D|-H(\mu)-\epsilon}{\log m}.
\]
Since $\epsilon$ can be chosen arbitrarily small, \eqref{lowerbnd} follows.
\end{proof}

Note that, in \eqref{lowerbnd}, 
\[
H(\mu)\  =\ -m^{-d}\sum_{\ell\in D }(a_\ell^{L-1}\big((L-1)\log a_\ell - d\log m\big)\ \leq\  \log | D|
\] 
with equality if and only if $\mu$ gives equal mass to all cylinders of the same length, which happens if and only if each column in our construction contains the same number of rectangles. This happens exactly when the box and Hausdorff dimension coincide. Thus our lower bounds give that $\underline{\dim}_{\theta}F> \hdd F$ whenever $\theta>0$, provided that the Hausdorff and box dimensions of $F$ are different.

Since the measures that we have constructed to give lower bounds on $\underline{\dim}_{\theta}F$ are rather crude, it is unlikely that our lower bound for $\underline{\dim}_{\theta}F$ converges to ${\dim}_\textup{B} F$ as $\theta\to 1$. However, a lower bound which \emph{does} approach ${\dim}_\textup{B} F$ as $\theta\to 1$ is given by Proposition \ref{assouad}, noting that $\dim_\textup{A} F > \underline{\dim}_\textup{B} F = {\dim}_\textup{B} F$ provided ${\dim}_\textup{B} F > {\dim}_\textup{H} F$, see \cite{mackay,Fra}.

Many questions on the intermediate dimensions of Bedford-McMullen carpets remain, most notably finding the exact forms of 
$\underline{\dim}_{\theta}F$ and $\overline{\dim}_{\theta}F$. In that direction we would at least conjecture that these intermediate dimensions are equal and strictly monotonic. One might hope to get better estimates using alternative definitions of  $\mu$ in Proposition \ref{propup}  and $\nu_K$ in Proposition \ref{proplb}, but McMullen's measure and our modifications seemed to work best when optimising mass distribution type estimates across $F$ and over the required range of scales.

\section*{Acknowledgements}

JMF was financially supported by a \emph{Leverhulme Trust Research Fellowship} (RF-2016-500) and  KJF and JMF were financially supported in part by an \emph{EPSRC Standard Grant} (EP/R015104/1).  We thank James Robinson for pointing out the reference \cite{KP}.
\\

\bibliographystyle{plain}

{\small
Kenneth J. Falconer, E-mail: \texttt{kjf@st-andrews.ac.uk}
\vskip.2em
Jonathan M. Fraser, E-mail: \texttt{jmf32@st-andrews.ac.uk}
\vskip.2em
Tom Kempton, E-mail: \texttt{thomas.kempton@manchester.ac.uk}
\vskip.2em
}

\bigskip
\end{document}